\newtheorem{theorem}{Theorem}[section]
\newtheorem{lemma}[theorem]{Lemma}
\newtheorem{proposition}[theorem]{Proposition}
\newtheorem{corollary}[theorem]{Corollary}
\theoremstyle{definition}
\numberwithin{equation}{section}
\DeclareMathOperator{\diam}{diam}
\newcommand{\R}{\mathbb{R}}
\newcommand{\N}{\mathbb{N}}
\DeclareMathOperator{\ch}{ch}
\newcommand{\Fixx}{\mathrm{Fix}}
\begin{document}
\title[On some results related to the Karlsson-Nussbaum conjecture in geodesic spaces]{On some results related to the Karlsson-Nussbaum conjecture in geodesic spaces}
\author[A. Huczek]{Aleksandra Huczek}
\address{Department of Mathematics, Pedagogical University of Krakow,
PL-30-084 Cracow, Poland}
\email{aleksandra.huczek@up.krakow.pl}
\date{}

\begin{abstract}
We show a Wolff-Denjoy type theorem in the case of a one-parameter continuous semigroups of nonexpansive mappings in which there is a compact mapping. Using the notion of attractor we are also able to prove some specific properties directly related to the Karlsson-Nussbaum conjecture.
\end{abstract}

\subjclass[2020]{Primary 32H50; Secondary 47H09, 37F44, 51M10, 46T25, 53C23}
\keywords{Wolff-Denjoy theorem, Karlsson-Nussbaum conjecture, geodesic space, nonexpansivve mapping, one-parameter semigroup, Hilbert’s projective metric, Kobayashi’s distance}
\maketitle

\section{Introduction}

Let us suppose that $f:\Delta \rightarrow
\Delta $ is a holomorphic map of the unit disc $\Delta \subset \mathbb{C}$
without a fixed point. The classical Wolff-Denjoy theorem asserts that then there is a point $\xi \in \partial \Delta $ such that the iterates $f^{n}$ converge locally uniformly to $\xi $ on $\Delta $. There is a wide literature concerning various generalizations of this theorem (see \cite{Ab, Bu1, Ka1, Nu, LLNW, Pi} and references therein). One of the important generalizations was given by Beardon who initially considered the Wolff-Denjoy theorem in a purely geometric way depending only on the hyperbolic properties of a metric \cite{Be1}. In the next step Beardon developed his results for strictly convex bounded domains with the Hilbert metric \cite{Be2}. Considering the notion of the omega limit set $\omega _{f}(x)$ as the set of accumulation points of the sequence $f^{n}(x)$ and the notion of the attractor $\Omega _{f}=\bigcup_{x\in D}\omega _{f}(x)$, Karlsson and Noskov showed in \cite{KaNo} that if a bounded convex domain $D$ is endowed with the Hilbert metric $d_{H}$, then the attractor $\Omega _{f}$ of a fixed-point free nonexpansive map $f:D\rightarrow D$ is a star-shaped subset of $\partial D$. This has led to a conjecture formulated by Karlsson and Nussbaum (see \cite{Ka3, Nu}) asserting that if $D$ is a bounded convex domain in a finite-dimensional real vector space and $f:D\rightarrow D$ is a fixed point free nonexpansive mapping acting on the Hilbert metric space $(D,d_{H})$, then there exists a convex set $\Omega \subseteq \partial D$ such that for each $x\in D$, all accumulation points $\omega _{f}(x)$ of the orbit $O(x,f)$ lie in $\Omega $. It remains one of the major problems in the field.

The first objective of this paper is to extend the Wolff-Denjoy type theorem to the case of one-parameter continuous semigroups of nonexpansive ($1$-Lipschitz) mappings in some quasi-geodesic spaces.  
We show in Section 3 that if $(Y,d)$ is a $(1,\kappa)$-quasi-geodesic space satisfying Axiom $1'$ and Axiom $4'$ (introduced in Section 2) and $S=\{f_{t}:Y\rightarrow Y\}_{t\geq 0}$ is a one-parameter continuous semigroup of nonexpansive mappings on $Y$ without bounded orbits and there exists $t_0>0$ such that $f_{t_0}:Y \rightarrow Y$ is a compact mapping, then there exists $\xi \in \partial Y$ such that the semigroup $S$ converges uniformly on bounded sets of $Y$ to $\xi$. We apply our consideration in the case of bounded strictly convex domains in a Banach space with the metric satisfying condition 
\begin{equation}
	d(sx+(1-s)y,z)\leq \max \{d(x,z),d(y,z)\}.
	\tag{C}
	\label{C}
\end{equation}  
In particular, we obtain the Wolff-Denjoy type theorem for Hilbert’s and Kobayashi’s metrics but its applicability is much wider.

The second aim of this note is to show some results related to the Karlsson-Nussbaum conjecture. We start by generalizing the Abate and Raissy result asserting that a big horoball considered in a bounded and convex domain with the Kobayashi distance is a star-shaped set with respect to the center of the horoball. Using this fact we show in Section 4 that for a compact nonexpansive mapping $f$ acting on a bounded convex domain in a Banach space equipped with the metric satisfying condition ($\ref{C}$), there exists $\xi$ on the boundary such that $\Omega_{f} \subset \ch (\ch(\xi))$. Karlsson proved in \cite{Ka3} a special case of the Karlsson-Nussbaum conjecture by showing that the attractor of a fixed point free nonexpansive mapping is a star-shaped subset of the boundary of the space. In the proof he used the Gromov product. We present a shorter proof of a little more general result, without using the Gromov product, for every metric space $(D,d)$ satisfying condition 
\begin{equation*}
	\lim_{n\rightarrow \infty } [d(x_{n},y_{n})-\max
	\{d(x_{n},z),d(y_{n},z)\}]=\infty 
\end{equation*} 
for any sequences $\{x_n\}$ and $\{y_n\}$ converging to distinct points $x$ and $y$ on the boundary such that the segment $[x,y] \nsubseteq \partial D$, and for any $z \in D$ (see Section 4 for more details).

In Section 5 we formulate counterparts of the above results for a one-parameter continuous semigroup of nonexpansive mappings.

\section{Preliminaries}

Let $(Y, d)$ be a metric space. Recall
that a curve $\gamma :[a,b]\rightarrow Y$ is called 
\textit{$(\lambda ,\kappa)$-quasi-geodesic} if there exists $\lambda \geq 1,\kappa \geq 0$ such
that for all $t_{1},t_{2}\in \lbrack a,b],$
\[
\frac{1}{\lambda }\left\vert t_{1}-t_{2}\right\vert -\kappa \leq d(\gamma
(t_{1}),\gamma (t_{2}))\leq \lambda \left\vert t_{1}-t_{2}\right\vert
+\kappa .
\]
A metric space $Y$ is called $(\lambda ,\kappa)$\textit{-quasi-geodesic} if
every pair of points in $Y$ can be connected by a $(\lambda ,\kappa)$-quasi-geodesic. A convex bounded domain in a Banach space equipped with the Kobayashi distance is an example of a $(1, \kappa)$-quasi geodesic metric space for any $\kappa>0$. 

We are interested in considerations related to nonexpansive and contractive mappings in $(\lambda ,\kappa)$-quasi-geodesic metric spaces. We call a map $f:Y\rightarrow Y$ \textit{nonexpansive} if $$d(f(x),f(y))\leq d(x,y)$$ for every $x,y\in Y$. A map $f:Y\rightarrow Y$ is called \textit{contractive }if $$d(f(x),f(y))<d(x,y)$$ for every distinct $x,y\in Y$. 

Beardon \cite{Be2} and Karlsson \cite{Ka1} considered some four properties of metric spaces called axioms. Proceeding similarly, we slightly modify these axioms to make them better suited for non-proper metric spaces.\\

\noindent \textbf{Axiom 1'}. The metric space $(Y,d)$ is an open dense
subset of a metric space $(\overline{Y},\overline{d})$, whose relative
topology coincides with the metric topology. For any $w\in Y$, if $\{x_{n}\}$
is a sequence in $Y$ converging to $\xi \in \partial Y=\overline{Y}\setminus
Y$, then $$d(x_{n},w)\rightarrow \infty. $$ 

Notice that Axiom 1' implies that if $A\subset Y$ is bounded, then the $
\overline{d}$-closure of $A$ does not intersect the boundary $\partial Y$
and hence coincides with the $d$-closure of $A$. \\

\noindent \textbf{Axiom 3'}. If $\{x_{n}\}$ and $\{y_{n}\}$ are sequences in
$Y$, $x_{n}\rightarrow \xi \in \partial Y$, $y_{n}\rightarrow \eta \in
\partial Y$, and if for some $w\in Y,$
\[
d(x_{n},y_{n})-d(y_{n},w)\rightarrow -\infty ,
\]
then $\xi =\eta .$\medskip \\

\noindent \textbf{Axiom 4'}. If $\{x_{n}\}$ and $\{y_{n}\}$ are sequences in
$Y$, $x_{n}\rightarrow \xi \in \partial Y$, $y_{n}\rightarrow \eta \in
\partial Y$, and if for all $n,$
\[
d(x_{n},y_{n})\leq c
\]
for some constant $c$, then $\xi =\eta .$\medskip

It is not hard to show that Axioms $1'$ and $2'$ imply Axiom $3'$ and Axioms $1'$ and $3'$ imply Axiom $4'$. We say that a mapping  $f:Y\rightarrow Y$ is \textit{compact}
if $\overline{f(Y)}^{\overline{d}}$, the $\overline{d}$-closure of $f(Y)$, is compact in $(\overline{Y},\overline{d}).$ \\

Let us introduce one more property of metric spaces satisfying Axiom 1' that we will call Axiom $5'$. \\

\noindent \textbf{Axiom 5'}
If $\{x_n\}$ and $\{y_n\}$ are sequences in $Y$ and $d(x_n,y_n) \rightarrow 0$, as $n \to \infty$, then 
\begin{equation*}
	\overline{d}(x_n,y_n) \rightarrow 0, \quad n \to \infty.
\end{equation*}

The following notion of a horoball will be needed throughout the paper. We recall the general definitions introduced by Abate in \cite{Ab}. Define the \textit{small
	horoball} of center $\xi \in \partial Y$, pole $z_{0}\in Y$ and radius $r\in
\mathbb{R}$ by
\[
E_{z_{0}}(\xi ,r)=\{y\in Y:\limsup_{w\rightarrow \xi }d(y,w)-d(w,z_{0})\leq
r\}
\]
and the \textit{big horoball} by
\[
F_{z_{0}}(\xi ,r)=\{y\in Y:\liminf_{w\rightarrow \xi }d(y,w)-d(w,z_{0})\leq
r\}.
\]

Karlsson in \cite{Ka1} and Lemmens and Nussbaum in \cite{LeNu} note that in a metric space satisfying Axioms $1$ and $2$, each horoball intersects the boundary of the space at exactly one point. The next lamma shows an important property of big horoballs i.e. Axiom $3'$ can be regarded as a characteristic that the intersection of horoball's closure consists of a single point.

\begin{lemma}
	\label{a3} Let $(Y,d)$ satisfy Axiom $3'$, $z_{0}\in Y$, $\zeta \in \partial Y$
	and $r\in \mathbb{R}$. Then $$\bigcap_{r\in \mathbb{R}}\overline{
		F_{z_{0}}(\zeta ,r)}=\{\zeta \}.$$
\end{lemma}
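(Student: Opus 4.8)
The plan is to prove the two inclusions $\bigcap_{r}\overline{F_{z_0}(\zeta,r)}\supseteq\{\zeta\}$ and $\bigcap_{r}\overline{F_{z_0}(\zeta,r)}\subseteq\{\zeta\}$ separately. For the first inclusion, I would take a sequence $w_n\to\zeta$ with $w_n\in Y$ (which exists since $\zeta\in\partial Y$ and $Y$ is dense in $\overline Y$ by Axiom $1'$) and show that, for a suitable choice, $w_n\in F_{z_0}(\zeta,r)$ for all large $n$ once $r$ is fixed; then $\zeta$, being the $\overline d$-limit of points of $F_{z_0}(\zeta,r)$, lies in $\overline{F_{z_0}(\zeta,r)}$. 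The point is that $\liminf_{w\to\zeta} d(w_n,w)-d(w,z_0)\le \limsup_{w\to\zeta}\big(d(w_n,w)-d(w,z_0)\big)$, and along the same approximating net one can bound this by, e.g., using that $d(w_n,w)-d(w,z_0)\le d(w_n,z_0)$ by the triangle inequality, so $w_n\in F_{z_0}(\zeta, d(w_n,z_0))$; this only shows membership in a horoball of large radius, so a little more care is needed — instead I would observe that for fixed $r$, choosing $w_n$ deep enough in the approach to $\zeta$ makes the liminf small. The cleanest route: show directly that $\zeta\in\overline{F_{z_0}(\zeta,r)}$ for every $r\ge 0$ by noting the horoballs are nested ($F_{z_0}(\zeta,r)\subseteq F_{z_0}(\zeta,r')$ for $r\le r'$) and nonempty, and that $z_0$-centered distance estimates force the approaching sequence into every horoball eventually.

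For the reverse inclusion — which is the substantive part — I would argue by contradiction. Suppose $\eta\in\bigcap_r\overline{F_{z_0}(\zeta,r)}$ with $\eta\ne\zeta$. If $\eta\in Y$, then $\eta\in\overline{F_{z_0}(\zeta,r)}\cap Y = \overline{F_{z_0}(\zeta,r)}^{d}$ (by the remark following Axiom $1'$, bounded sets have coinciding closures, but here one must note $F_{z_0}(\zeta,r)$ need not be bounded; still, a point of $Y$ in the $\overline d$-closure is a $d$-limit of horoball points since the topologies agree on $Y$), so $\eta$ is a $d$-limit of a sequence $y_k\in F_{z_0}(\zeta,r_k)$... actually the simpler statement is: $\eta\in\overline{F_{z_0}(\zeta,r)}$ for \emph{every} $r$, and if $\eta\in Y$ then $\eta\in F_{z_0}(\zeta,r')$ for $r'$ slightly larger than $r$ by continuity of $d$, hence $\eta\in\bigcap_{r}F_{z_0}(\zeta,r)$, which would mean $\liminf_{w\to\zeta}d(\eta,w)-d(w,z_0)=-\infty$, i.e. $d(\eta,w_k)-d(w_k,z_0)\to-\infty$ along some $w_k\to\zeta$. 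Applying Axiom $3'$ with $x_n:=\eta$ (constant, but Axiom $3'$ as stated needs $x_n\to\xi\in\partial Y$; here $\eta\in Y$) — so instead I use the triangle inequality $d(\eta,w_k)\ge d(w_k,z_0)-d(\eta,z_0)$ giving $d(\eta,w_k)-d(w_k,z_0)\ge -d(\eta,z_0)>-\infty$, a contradiction. So $\eta\notin Y$, i.e. $\eta\in\partial Y$.

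Now with $\eta\in\partial Y$, $\eta\ne\zeta$, and $\eta\in\overline{F_{z_0}(\zeta,r)}$ for all $r$: pick for each fixed $r$ a sequence $y^{(r)}_k\in F_{z_0}(\zeta,r)$ with $y^{(r)}_k\xrightarrow{\overline d}\eta$. By a diagonal argument choose $y_n\in F_{z_0}(\zeta,r_n)$ with $r_n\to$ some fixed value (or $r_n\downarrow$, whichever is convenient) and $y_n\to\eta$. For each $y_n\in F_{z_0}(\zeta,r_n)$ there is $w^n_m\to\zeta$ with $\limsup_m$ replaced appropriately: $\liminf_{w\to\zeta}d(y_n,w)-d(w,z_0)\le r_n$ gives a sequence $w^n_m\to\zeta$ ($m\to\infty$) with $d(y_n,w^n_m)-d(w^n_m,z_0)\le r_n+1$ for large $m$. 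Diagonalize to get $x_n:=w^n_{m(n)}\to\zeta$ and $y_n\to\eta$ with $d(y_n,x_n)-d(x_n,z_0)\le r_n+1\le C$ for some constant; but $d(x_n,z_0)\to\infty$ by Axiom $1'$, so $d(y_n,x_n)-d(x_n,z_0)\to$ something bounded above, and rearranging $d(y_n,x_n)-d(x_n,z_0)$ — I want it to go to $-\infty$ to invoke Axiom $3'$ (with the roles: $x_n\to\eta$, $y_n\to\zeta$ in the axiom's notation). Indeed $d(x_n,y_n)-d(x_n,z_0)\le C$ does not immediately give $\to-\infty$; but combining with $d(x_n,z_0)\to\infty$ we only get $d(x_n,y_n)\le C+d(x_n,z_0)$. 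To force the Axiom $3'$ hypothesis I should instead directly control things so that $d(y_n,x_n)-d(x_n,z_0)$ stays bounded while $d(x_n,z_0)\to\infty$ — hmm, that gives $d(x_n,y_n)\to\infty$ too, so this is the ``bounded difference'' situation, which is Axiom $3'$ territory only when the difference $\to-\infty$. I expect the right tool here is actually to recognize that $\overline{F_{z_0}(\zeta,r)}$ for $\eta$ a boundary point, combined with Axiom $3'$ in the form: $x_n\to\zeta$, $y_n\to\eta$, and $d(x_n,y_n)-d(x_n,z_0)$ bounded above forces $d(y_n,x_n)-d(x_n,z_0)\to-\infty$ is false in general — so the genuine content must be that one can choose the approach to make the difference $\to-\infty$. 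The main obstacle, and the step I would spend the most care on, is exactly this diagonalization: extracting sequences $x_n\to\zeta$ and $y_n\to\eta$ from the nested family of horoball closures for which $d(x_n,y_n)-d(x_n,z_0)\to-\infty$, so that Axiom $3'$ applies and yields $\zeta=\eta$, contradicting $\zeta\ne\eta$. I would handle it by using the $\liminf$ in the definition of the big horoball at radius $r_n\to-\infty$ (big horoballs of arbitrarily negative radius are still the relevant objects since the intersection is over all $r\in\mathbb R$), so that $\liminf_{w\to\zeta}d(y_n,w)-d(w,z_0)\le r_n\to-\infty$ supplies exactly the needed $d(y_n,x_n)-d(x_n,z_0)\le r_n+\tfrac1n\to-\infty$ after choosing $x_n$ appropriately close to $\zeta$ and close enough that $\overline d(y_n,\eta)<\tfrac1n$.
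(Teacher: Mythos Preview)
The paper does not actually prove this lemma; it only refers to the finite-dimensional analogue in \cite{HuWi1}. For the substantive inclusion $\bigcap_{r}\overline{F_{z_0}(\zeta,r)}\subseteq\{\zeta\}$, your final diagonalization is correct and is the expected argument: for $\eta\in Y$ one uses that each $F_{z_0}(\zeta,r)$ is $d$-closed (sublevel set of the $1$-Lipschitz horofunction $y\mapsto\liminf_{w\to\zeta}[d(y,w)-d(w,z_0)]$), so $\eta$ would lie in every horoball, contradicting the bound $\liminf\ge -d(\eta,z_0)$; for $\eta\in\partial Y\setminus\{\zeta\}$, taking $r_n\to-\infty$, choosing $y_n\in F_{z_0}(\zeta,r_n)$ with $\overline d(y_n,\eta)<1/n$, and then $x_n$ with $\overline d(x_n,\zeta)<1/n$ realizing the $\liminf$ up to $1/n$, yields $d(y_n,x_n)-d(x_n,z_0)\to-\infty$, and Axiom~$3'$ forces $\eta=\zeta$. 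Your earlier attempt with a fixed $r$ and a merely bounded difference was indeed a dead end, and you are right to discard it in favour of letting $r_n\to-\infty$.

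The inclusion $\zeta\in\bigcap_{r}\overline{F_{z_0}(\zeta,r)}$ is where your proposal has a genuine gap. The claim that ``choosing $w_n$ deep enough in the approach to $\zeta$ makes the liminf small'' is exactly the content to be proved, and Axiom~$3'$ gives no purchase here (its hypothesis concerns sequences with \emph{distinct} boundary limits). In fact this inclusion is \emph{false} under the stated hypotheses alone: let $Y$ be the metric tree obtained by gluing geodesic segments of length $n$, $n\ge 1$, at a common endpoint $z_0$, with vertices $c_{n,j}$ ($1\le j\le n$) at distance $j$ from $z_0$ along the $n$-th branch, and compactify by a single ideal point $\zeta$ approached precisely by sequences with $j\to\infty$. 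Then for any $y=c_{n,j}$ and any $w\to\zeta$ one eventually has $w$ on a different branch, so $d(y,w)-d(w,z_0)=j$; hence $F_{z_0}(\zeta,r)=\varnothing$ for all $r<0$ and $\bigcap_r\overline{F_{z_0}(\zeta,r)}=\varnothing$. Axiom~$3'$ holds vacuously, Axiom~$1'$ is easily arranged, and the space is geodesic. So the step ``big horoballs of every radius accumulate at their centre'' cannot be extracted from Axiom~$3'$; it must come from additional structure present in the paper's actual applications (the convex-domain setting with condition~(C), or an analogue of Lemma~\ref{lem1_comp}), and your sketch does not supply it.
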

The proof proceeds similarly to the finite-dimensional case that can be found in \cite{HuWi1}. \\

We conclude this section by recalling the definitions of Hilbert's and
Kobayashi's metrics. 
Let $K$ be a
closed normal cone with a non-empty interior in a real Banach space $V$. We
say that $y\in K$ \textit{dominates} $x\in V$ if there exists $\alpha ,\beta
\in \mathbb{R}$ such that $\alpha y\leq x\leq \beta y$. This notion yields
on $K$ an equivalence relation $\sim _{K}$ by $x\sim _{k}y$ if $x$ dominates
$y$ and $y$ dominates $x$. For all $x,y\in K$ such that $x\sim _{K}y$ and
$y\neq 0,$ define
\[
M(x/y)=\inf \{\beta >0:x\leq \beta y\}
\]
and
\[
m(x/y)=\sup \{\alpha >0:\alpha y\leq x\}.
\]
The\textit{\ Hilbert (pseudo-)metric} is defined by
\[
d_{H}(x,y)=\log \bigg(\frac{M(x/y)}{m(x/y)}\bigg).
\]
Moreover, we put $d_{H}(0,0)=0$ and $d_{H}(x,y)=\infty
$ if $x\nsim _{K}y$. It can be shown that $d_{H}$ is a metric iff $x=\lambda y$ for some $
\lambda >0.$ 

It will be important to us that a bounded convex domain in a Banach space equipped with the Hilbert metric is a complete geodesic space that satisfies Axiom $1'$ [see \cite{Nu}, Theorem 4.13]). Moreover, the following theorem is true.

\begin{theorem}
	Let $D$ be a bounded convex domain in a real Banach space and let $\{x_n\},\{y_n\} \subset D$. If $d_H(x_n,y_n) \rightarrow 0$, as $n \to \infty$, then 
	\begin{equation*}
		||x_n-y_n|| \rightarrow 0, \quad n \to 0.
	\end{equation*}
\end{theorem}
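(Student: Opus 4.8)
The plan is to argue by contradiction, extracting a subsequence along which $\|x_n - y_n\|$ stays bounded away from zero while $d_H(x_n,y_n)\to 0$, and then to produce a quantitative lower bound on the Hilbert metric in terms of the norm distance that contradicts this. The key is that, since $D$ is bounded, points of $D$ stay a controlled distance from $\partial D$ only when we are allowed to pass to subsequences; the Hilbert metric degenerates to zero only when a pair of points collapses, and we must make this precise uniformly on compact subsets of $D$ but allowing the points to approach the boundary.

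First I would recall the standard cross-ratio formula for the Hilbert metric: if $x,y\in D$ are distinct, let $\ell$ be the line through $x$ and $y$, and let $a,b$ be the two points where $\ell$ meets $\partial D$, ordered as $a,x,y,b$. Then
\[
d_H(x,y)=\frac12\log\frac{\|y-a\|\,\|x-b\|}{\|x-a\|\,\|y-b\|}.
\]
(Here I am using the projective/cone description from the excerpt, restricted to the affine slice; equivalently one may cite the identification of $d_H$ on a bounded convex domain with the Birkhoff cross-ratio metric.) From this, writing $\|x-y\|=t$, $\|x-a\|=\alpha$, $\|y-b\|=\beta$, one gets
\[
d_H(x,y)=\frac12\log\Big(1+\frac{t}{\alpha}\Big)\Big(1+\frac{t}{\beta}\Big)\ \ge\ \frac12\log\Big(1+\frac{t}{\diam D}\Big),
\]
since $\alpha,\beta\le\diam D$. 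This inequality is the crucial point: it is completely uniform, with no dependence on how close $x$ or $y$ is to the boundary. Hence if $\|x_n-y_n\|\ge\varepsilon>0$ along a subsequence, then $d_H(x_n,y_n)\ge\frac12\log(1+\varepsilon/\diam D)>0$ along that subsequence, contradicting $d_H(x_n,y_n)\to 0$.

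Thus the heart of the argument is just establishing the displayed cross-ratio estimate; everything else is the contradiction bookkeeping. I would first justify that $\diam D<\infty$ (this is the hypothesis that $D$ is bounded), note that the cross-ratio formula requires $x\ne y$ which is automatic once $\|x_n-y_n\|\ge\varepsilon$, and then simply chain the inequalities. One mild subtlety to address: the segment $[x,y]$ must actually be extendable to hit $\partial D$ in two points, which holds because $D$ is a bounded convex \emph{domain} (open and convex) so the line through any two interior points exits $D$ on both sides at finite boundary points; I would state this explicitly.

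The main obstacle is essentially expository rather than mathematical: one must be careful that the estimate is genuinely uniform — i.e. that the ``bad'' quantities $\|x-a\|$ and $\|y-b\|$ appear in the \emph{denominator} of the quantities $t/\alpha$, $t/\beta$ inside the logarithm and so only help us, rather than in a position where their smallness would hurt. Once the cross-ratio inequality $d_H(x,y)\ge\frac12\log(1+\|x-y\|/\diam D)$ is in hand, the theorem follows immediately, and in fact one obtains the stronger quantitative statement $\|x_n-y_n\|\le \diam D\,\big(e^{2 d_H(x_n,y_n)}-1\big)\to 0$, which I would record as a remark since it also yields Axiom $5'$ for the Hilbert metric.
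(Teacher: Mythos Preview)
Your argument is correct and is essentially the same as the paper's: both use the cross-ratio formula for $d_H$, bound $\|x-a\|,\|y-b\|\le\diam D$ to obtain a uniform lower bound $d_H(x,y)\ge c\log(1+\|x-y\|/\diam D)$, and conclude by contradiction. The only cosmetic discrepancy is your factor $\tfrac12$ in the cross-ratio formula, which does not match the paper's normalization $d_H(x,y)=\log(M(x/y)/m(x/y))$; this is irrelevant for the argument but you should drop the $\tfrac12$ to be consistent with the paper's conventions.
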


\begin{proof}
	Let $\{x_n\},\{y_n\} \subset D$ be sequences in $D$ and for any $n \in \N$ consider a straight line passing through $x_n$ and $y_n$ that intersects the boundary of $D$ in precisely
	two points $a_n$ and $b_n$ such that $x_n$ is between $a_n$ and $y_n$, and $y_n$ is between $x_n$ and $b_n$.
	Then 
	\begin{eqnarray*}
		d_H(x_n,y_n) &=& \log \bigg( \frac{||y_n-a_n ||\cdot||x_n-b_n||}{||x_n-a_n||\cdot||y_n-b_n||} \bigg) \\
		&=& \log \bigg( \frac{(||x_n-a_n||+||x_n+y_n||)(||y_n-b_n||+||x_n-y_n||)}{||x_n-a_n||\cdot||y_n-b_n||} \bigg) \\
		&=& \log \bigg(\bigg(1+\frac{||x_n-y_n||}{||x_n-a_n||}\bigg)\bigg(1 + \frac{||x_n-y_n||}{||y_n-b_n||}\bigg)   \bigg).
	\end{eqnarray*}	
	
	Let $d = \diam D$. We note that $||x_n-a_n||\leq d$ and $||y_n-b_n||\leq d$ for every $n \in \N$. Hence 
	\begin{equation*}
		\bigg(1+\frac{||x_n-y_n||}{||x_n-a_n||}\bigg)\bigg(1 + \frac{||x_n-y_n||}{||y_n-b_n||}\bigg) \geq \bigg(1+\frac{||x_n-y_n||}{d}\bigg)\bigg(1 + \frac{||x_n-y_n||}{d}\bigg).
	\end{equation*}
	Consider subsequences $\{x_{n_k}\}$ and $\{y_{n_k}\}$ of sequences $\{x_{n}\}$ and $\{y_{n}\}$, respectively. If there existed  $\delta>0$ and $k_0 \in \N$ such that $||x_{n_k}-y_{n_k}|| \geq \delta$ for $k \geq k_0$, then 
	\begin{equation}\label{delta1} 
		\bigg(1+\frac{||x_{n_k}-y_{n_k}||}{d}\bigg)\bigg(1 + \frac{||x_{n_k}-y_{n_k}||}{d}\bigg) \geq \bigg(1+\frac{\delta}{d}\bigg)\bigg(1 + \frac{\delta}{d}\bigg) = \delta_1 >1
	\end{equation}
	that would contradict our assumption $d_H(x_n,y_n) \rightarrow 0$. Therefore $||x_n-y_n|| \rightarrow 0$, as $n \to \infty$.
\end{proof}

It means that a convex bounded domain in a Banach space equipped with the Hilbert metric satisfies Axiom $5'$. \\

Recall that if $D$
is a bounded convex domain of a complex Banach space $V$, then the \textit{Kobayashi distance} between $z,w\in D$ is given by
\[
k_{D}(z,w)=\inf \{k_{\Delta }(0,\gamma )\mid \exists \varphi \in \mbox{Hol}
(\Delta ,D):\varphi (0)=z,\,\varphi (\gamma )=w\},
\]
where $k_{\Delta }$ denotes the Poincar\'{e} metric on the unit disc $\Delta
$.

It is well known that if $D$ is a bounded and convex domain in a complex Banach space, then $(D,k_D)$ is a complete metric space satisfying Axiom $1'$ (see \cite{Ha}, \cite{KRS}). \\

The second example of a space that satisfies the Axiom $5'$ (with respect to the closure in the norm $(\overline{D},||\cdot||)$) is a convex bounded domain in a Banach space equipped with the Kobayashi distance (see Theorem 3.4, \cite{KRS}). It follows from the following theorem.

\begin{theorem}
	Suppose $D$ is a bounded convex domain of a Banach space. Then  
	\begin{equation*}
		\arg \tanh \bigg(\frac{||x-y||}{\diam_{||\cdot||}D}\bigg) \leq k_D(x,y)
	\end{equation*}
	for every $x,y \in D$.
\end{theorem}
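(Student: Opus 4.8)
The plan is to combine two standard facts about the Kobayashi distance: that it is contracted by holomorphic maps, and that on the unit disc it coincides with the Poincar\'e distance, for which $k_{\Delta}(0,w)=\arg\tanh|w|$. Concretely, I would produce a holomorphic map $g\colon D\to\Delta$ carrying $y$ to $0$ and $x$ to a point of modulus exactly $||x-y||/\diam_{||\cdot||}D$, and then simply read off the inequality.

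First dispose of the trivial case $x=y$, where both sides vanish. For $x\neq y$ set $t=||x-y||$ and $\delta=\diam_{||\cdot||}D$. By the complex Hahn--Banach theorem there is a continuous $\mathbb{C}$-linear functional $\ell$ on the ambient Banach space with $||\ell||=1$ and $\ell(x-y)=||x-y||=t$. Define $g\colon D\to\mathbb{C}$ by $g(z)=\delta^{-1}\bigl(\ell(z)-\ell(y)\bigr)$, which is affine, hence holomorphic on $D$. For each $z\in D$ one has
\[
|g(z)|=\delta^{-1}|\ell(z-y)|\leq\delta^{-1}||z-y||\leq\delta^{-1}\diam D=1,
\]
and since $g(x)=t/\delta\neq0=g(y)$ the map $g$ is nonconstant; as $D$ is a domain, the maximum modulus principle (equivalently, openness of the bounded linear functional $\ell$) forces $g(D)\subseteq\Delta$.

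It remains to invoke the contraction property. For any $\varphi\in\mathrm{Hol}(\Delta,D)$ with $\varphi(0)=y$ and $\varphi(\gamma)=x$, the composite $g\circ\varphi$ lies in $\mathrm{Hol}(\Delta,\Delta)$, so the Schwarz--Pick lemma gives $k_{\Delta}\bigl(g(x),g(y)\bigr)=k_{\Delta}\bigl((g\circ\varphi)(0),(g\circ\varphi)(\gamma)\bigr)\leq k_{\Delta}(0,\gamma)$; taking the infimum over all admissible $\varphi$ and $\gamma$ yields $k_{\Delta}\bigl(g(x),g(y)\bigr)\leq k_{D}(x,y)$. Since $g(y)=0$ and $g(x)=t/\delta$, the left-hand side equals $k_{\Delta}(0,t/\delta)=\arg\tanh(t/\delta)=\arg\tanh\bigl(||x-y||/\diam_{||\cdot||}D\bigr)$, which is exactly the asserted bound. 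There is no genuinely hard step here: the only thing one has to spot is the right norming functional, and the only points deserving a word of care are the passage to the \emph{open} disc via the maximum modulus principle and the explicit value $k_{\Delta}(0,w)=\arg\tanh|w|$ of the Poincar\'e distance from the origin.
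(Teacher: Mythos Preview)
Your argument is correct. The paper does not supply its own proof of this inequality: it simply cites \cite{KRS}, Theorem~3.4, and uses the result as a black box to verify Axiom~$5'$ for the Kobayashi distance. What you have written is essentially the classical proof that appears in the cited reference: pick a norming functional on $x-y$, rescale by the diameter to land in the closed disc, upgrade to the open disc by openness of the functional (your maximum-modulus remark), and then apply the distance-decreasing property of holomorphic maps together with $k_{\Delta}(0,w)=\arg\tanh|w|$.

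Two small remarks. First, the ambient space is tacitly complex here (the Kobayashi distance is only defined in that setting), so your appeal to the complex Hahn--Banach theorem is appropriate even though the statement in the paper says merely ``Banach space''. Second, your parenthetical ``equivalently, openness of the bounded linear functional $\ell$'' is actually the cleaner justification in infinite dimensions: the maximum modulus principle does hold for holomorphic maps on domains in complex Banach spaces, but for an affine map the openness argument is immediate and avoids any worry about what ``holomorphic'' and ``domain'' buy you in that generality.
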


 It turns out that both Hilbert's and Kobayashi's metric $d$ satisfies the following condition ($C$) that is equivalent to the convexity of balls in $D$:
\begin{equation*}
	\tag{C}
	\label{C}
	d(sx+(1-s)y,z)\leq \max \{d(x,z),d(y,z)\}
\end{equation*}%
for all $x,y,z\in D$ and $s\in \lbrack 0,1]$ (see, e.g., \cite{HuWi1}). 

\section{Wolff-Denjoy theorems for semigroups}

Although, the subject of this section is the Wolff-Denjoy theorem in the continuous case (i.e., for semigroups of nonexpansive mappings), we will need the Wolff-Denjoy type theorem for iterates of a nonexpansive mapping (see \cite{HuWi1}).

\begin{theorem} \label{collinf} 
	Let $(Y,d)$ be a $(1,\kappa)$-quasi-geodesic space satisfying Axiom $1'$ and Axiom $4'$, and suppose that for every  $\zeta \in \partial Y$ and $z_{0}\in Y$, the intersection of horoballs' closures $\bigcap_{r\in \mathbb{R}}\overline{F_{z_{0}}(\zeta ,r)}^{\overline{d}}$ consists of a single point. If $f:Y \rightarrow Y$ is a compact nonexpansive mapping without bounded orbits, then there exists $\xi \in \partial Y$ such that the iterates $f^n$ of $f$ converge uniformly on bounded sets of $Y$ to $\xi$. 
\end{theorem}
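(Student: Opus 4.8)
The plan is to follow the Beardon--Karlsson strategy adapted to the non-proper setting. First I would fix a basepoint $z_0 \in Y$ and consider the orbit $\{f^n(z_0)\}$. Since $f_{t_0}$... well, here $f$ is compact, so $\overline{f(Y)}^{\overline d}$ is compact in $(\overline Y, \overline d)$; in particular every subsequence of $\{f^n(z_0)\}_{n\geq 1}$ has a $\overline d$-convergent subsubsequence, and because $f$ has no bounded orbits, Axiom $1'$ forces every such limit to lie on $\partial Y$. The first real step is to produce a single accumulation point: I would show that $\omega_f(z_0) \subseteq \partial Y$ is a singleton $\{\xi\}$. To do this I would use the horoball machinery. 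Using nonexpansiveness one checks that the "horofunction'' type quantities $d(f^n(z_0), w) - d(z_0,w)$ are eventually decreasing along the orbit in the appropriate limsup/liminf sense, so that for a suitable center $\zeta$ (an accumulation point of the orbit) and every $r$, the tail of the orbit enters $F_{z_0}(\zeta, r)$; then the hypothesis that $\bigcap_{r} \overline{F_{z_0}(\zeta,r)}^{\overline d} = \{\zeta\}$ pins down the accumulation point. More precisely: pick $\zeta \in \omega_f(z_0)$, say $f^{n_k}(z_0) \to \zeta$; by nonexpansiveness $d(f^{n}(z_0), f^{n_k}(z_0)) \leq d(f^{n-n_k}(z_0), z_0)$ for $n \geq n_k$, and combining this with the definition of the big horoball one shows $f^n(z_0) \in \overline{F_{z_0}(\zeta, r)}^{\overline d}$ for all large $n$ once $r$ is large; letting $r$ run over $\mathbb R$ and using compactness, every accumulation point of $\{f^n(z_0)\}$ lies in $\bigcap_r \overline{F_{z_0}(\zeta,r)}^{\overline d} = \{\zeta\}$, so $\omega_f(z_0) = \{\zeta\} =: \{\xi\}$, i.e. $f^n(z_0) \xrightarrow{\overline d} \xi$.

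Next I would upgrade convergence of the single orbit to uniform convergence on bounded sets. Let $B \subseteq Y$ be bounded and $x \in B$. By nonexpansiveness, $d(f^n(x), f^n(z_0)) \leq d(x, z_0) \leq \diam(B \cup \{z_0\}) =: c$ for all $n$. Now I would invoke Axiom $4'$: any accumulation point $\eta$ of $\{f^n(x)\}$ lies on $\partial Y$ (again by no bounded orbits plus Axiom $1'$), and since $d(f^{n}(x), f^{n}(z_0)) \leq c$ with $f^{n}(z_0) \to \xi$, Axiom $4'$ gives $\eta = \xi$. Hence $f^n(x) \xrightarrow{\overline d} \xi$ for every $x$. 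For the \emph{uniformity} in $x \in B$, I would argue by contradiction: if convergence were not uniform on $B$, there would be $\varepsilon > 0$, points $x_j \in B$ and indices $n_j \to \infty$ with $\overline d(f^{n_j}(x_j), \xi) \geq \varepsilon$; but $d(f^{n_j}(x_j), f^{n_j}(z_0)) \leq c$ and $f^{n_j}(z_0)$ lies in the compact set $\overline{f(Y)}^{\overline d}$ for $n_j \geq 1$, so passing to a subsequence both $f^{n_j}(x_j)$ and $f^{n_j}(z_0)$ converge in $\overline Y$, their limits are on $\partial Y$ and within $\overline d$-distance governed by Axiom $4'$ forcing them equal, while $f^{n_j}(z_0) \to \xi$; this yields $\overline d(f^{n_j}(x_j), \xi) \to 0$, a contradiction.

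The step I expect to be the main obstacle is the passage from $d$-estimates to $\overline d$-conclusions, i.e. controlling the orbit in the \emph{ambient} metric $\overline d$ rather than in $d$. Nonexpansiveness and the horoball inclusions are statements about $d$, whereas compactness of $\overline{f(Y)}^{\overline d}$ and the hypothesis on $\bigcap_r \overline{F_{z_0}(\zeta,r)}^{\overline d}$ live in $(\overline Y, \overline d)$; bridging them requires care, in particular checking that the $\overline d$-limit of a $d$-orbit contained in a big horoball actually lies in the $\overline d$-closure of that horoball (which is immediate) and that Axiom $4'$ can be applied to the relevant pairs (which needs both sequences to $\overline d$-converge to boundary points — guaranteed by compactness and Axiom $1'$). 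A secondary technical point is verifying the monotonicity/entering property of the orbit with respect to the family $\{F_{z_0}(\zeta,r)\}_r$ uniformly enough that a \emph{single} center $\zeta$ works; this is where one genuinely uses that $f$ is nonexpansive and that $\zeta$ itself is an orbit accumulation point, so the triangle inequality closes the loop.
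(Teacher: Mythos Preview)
Your steps 3 and 4 (passing from convergence of one orbit to pointwise and then uniform convergence on bounded sets via Axiom~$4'$ and compactness) are fine; the real gap is in step 2, where you try to show $\omega_f(z_0)=\{\zeta\}$. You pick an \emph{arbitrary} accumulation point $\zeta$ of $\{f^{n}(z_0)\}$ and claim that the tail of the orbit enters $F_{z_0}(\zeta,r)$ for every $r$. The estimate you write, $d(f^{n}(z_0),f^{n_k}(z_0))\leq d(f^{n-n_k}(z_0),z_0)$, only yields
\[
d(f^{n}(z_0),f^{n_k}(z_0))-d(f^{n_k}(z_0),z_0)\ \leq\ d_{\,n_k-n}-d_{\,n_k}\qquad (n_k>n),
\]
and for a \emph{generic} subsequence $\{n_k\}$ the right-hand side need not be negative or even bounded above by a fixed $r$. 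The standard remedy is Karlsson's choice of a subsequence $\{n_i\}$ with $d_m<d_{n_i}$ for all $m<n_i$ (see the proof of Theorem~\ref{gwHil}); this gives $f^{n}(z_0)\in F_{z_0}(\xi,0)$ for the \emph{specific} limit $\xi$ of that subsequence, but still only for the single radius $r=0$, not for $r\to-\infty$, so you cannot yet invoke the hypothesis $\bigcap_r\overline{F_{z_0}(\xi,r)}^{\overline d}=\{\xi\}$.

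This is precisely where the $(1,\kappa)$-quasi-geodesic hypothesis enters, and your proposal never uses it. The paper imports the result from \cite{HuWi1}, whose engine is Lemma~\ref{lem1_comp}: given \emph{any} $r\in\mathbb{R}$ and any sequence $\{a_n\}$, one finds (using quasi-geodesics from $z_0$ toward the orbit) a point $z\in Y$ and a subsequence with $f^{a_{n_k}}(z)\in F_{z_0}(\xi,r)$. The freedom to change the base point $z$ is exactly what pushes the orbit into horoballs of arbitrarily negative radius; combined with $d(f^{a_{n_k}}(y),f^{a_{n_k}}(z))\leq d(y,z)$ and Axiom~$4'$, every accumulation point of every orbit then lands in $\bigcap_r\overline{F_{z_0}(\xi,r)}^{\overline d}=\{\xi\}$. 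So the ``main obstacle'' you anticipate (the $d$ vs.\ $\overline d$ bookkeeping) is not the issue; the missing idea is the quasi-geodesic construction behind Lemma~\ref{lem1_comp}, without which you cannot get below $r=0$.
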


In the further considerations we will use the following lemma, the proof of which can be found in \cite{HuWi2}.

\begin{lemma}\label{compactK} Let $S=\{f_{t}:Y\rightarrow Y\}_{t\geq 0}$ be a one-parameter
	continuous semigroup of nonexpansive mappings on $Y$, $C$ a compact
	subset of $Y$ and $t_{0}>0.$ Then 
	\begin{equation*}
		K=\{f_{s}(x):0\leq s\leq t_{0},\;x\in C\}=\bigcup_{0\leq s\leq t_{0}}f_{s}(C)
	\end{equation*}%
	is compact.	
\end{lemma}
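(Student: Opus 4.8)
The plan is to exhibit $K$ as the continuous image of a compact set. Consider the map $\Phi\colon[0,t_0]\times C\to Y$ given by $\Phi(s,x)=f_s(x)$. Since both $[0,t_0]$ and $C$ are compact, so is the product $[0,t_0]\times C$; hence, once we know that $\Phi$ is continuous, $K=\Phi([0,t_0]\times C)$ is compact, which is exactly the assertion. Thus the whole proof reduces to checking the (sequential) continuity of $\Phi$, and since $(Y,d)$ is a metric space this may be done with sequences.

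So let $(s_n,x_n)\to(s,x)$ in $[0,t_0]\times C$, i.e. $s_n\to s$ in $[0,t_0]$ and $x_n\to x$ in $C$. By the triangle inequality together with the nonexpansiveness of $f_{s_n}$,
\[
d\big(f_{s_n}(x_n),f_s(x)\big)\le d\big(f_{s_n}(x_n),f_{s_n}(x)\big)+d\big(f_{s_n}(x),f_s(x)\big)\le d(x_n,x)+d\big(f_{s_n}(x),f_s(x)\big).
\]
The first summand tends to $0$ because $x_n\to x$. For the second, write $\delta_n=|s_n-s|$: if $s_n\ge s$ then $f_{s_n}(x)=f_s(f_{\delta_n}(x))$, while if $s_n<s$ then $f_s(x)=f_{s_n}(f_{\delta_n}(x))$, so in either case the nonexpansiveness of $f_{\min(s_n,s)}$ gives
\[
d\big(f_{s_n}(x),f_s(x)\big)\le d\big(f_{\delta_n}(x),f_0(x)\big),
\]
and the right-hand side tends to $0$ as $\delta_n\to0$ by the continuity of the orbit $t\mapsto f_t(x)$ at $t=0$, which is part of the definition of a continuous semigroup. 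Hence $f_{s_n}(x_n)\to f_s(x)$.

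Therefore $\Phi$ is continuous and $K=\Phi([0,t_0]\times C)$ is compact; equivalently, every sequence $y_n=f_{s_n}(x_n)$ in $K$ admits, after extracting $s_n\to s\in[0,t_0]$ and $x_n\to x\in C$, a subsequence converging to $f_s(x)\in K$. I do not expect any genuine obstacle here: the only input beyond the fact that each $f_t$ is a nonexpansive self-map of $Y$ is the strong continuity of $S$ in the parameter, used exactly once to control the term $d(f_{s_n}(x),f_s(x))$.
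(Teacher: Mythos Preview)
Your proof is correct: exhibiting $K$ as the image of the compact product $[0,t_0]\times C$ under the map $\Phi(s,x)=f_s(x)$ and checking sequential continuity via nonexpansiveness and the semigroup identity is exactly the right approach, and your reduction of $d(f_{s_n}(x),f_s(x))$ to $d(f_{\delta_n}(x),x)$ uses only the right-continuity at $0$ that the paper's definition of a continuous semigroup provides. The paper itself does not give a proof of this lemma but merely cites \cite{HuWi2}, so there is nothing further to compare; your argument is the standard one and almost certainly coincides with what is in that reference.
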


One of the classical arguments in this line of research is the Ca\l ka theorem which in the original version concerns a metric space with the property that
each bounded subset is totally bounded. We will need the following counterpart of the Ca\l ka theorem for semigroups (\cite{HuWi2}, Theorem 3.3). 

\begin{theorem}
	\label{calkaC}Suppose that $(Y,d)$ is a proper metric space. Let $x_{0}\in Y$
	and $S=\{f_{t}:Y\rightarrow Y\}_{t\geq 0}$ be a one-parameter continuous
	semigroup of nonexpansive mappings. If there exists a sequence of real
	numbers $\{s_{k}\}$ such that a sequence $\{f_{s_{k}}(x_{0})\}$ is bounded,
	then the orbit $O(x_{0})=\{f_{t}(x_{0}):t\geq 0\}$ of $S$ is bounded.
\end{theorem}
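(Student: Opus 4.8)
My plan is to reduce the assertion to the classical (discrete-time) Ca\l ka theorem applied to the time-one map of the semigroup. The first observation is that a proper metric space has the property that every bounded subset is totally bounded: the closure of such a subset is closed and bounded, hence compact, hence totally bounded. Consequently the classical Ca\l ka theorem is at our disposal for any single nonexpansive self-map $g$ of $Y$, in the form: if the orbit $\{g^{n}(x)\}_{n\ge 0}$ admits a bounded subsequence, then it is bounded.

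Next I would introduce $\varphi(t)=d\big(x_{0},f_{t}(x_{0})\big)$ for $t\ge 0$. Since $t\mapsto f_{t}(x_{0})$ is continuous (this being part of the definition of a continuous semigroup), $\varphi$ is continuous, so $M_{1}:=\sup_{0\le t\le 1}\varphi(t)<\infty$; and from $f_{t+s}=f_{t}\circ f_{s}$ together with nonexpansiveness one gets $\varphi(t+s)\le\varphi(t)+\varphi(s)$. I may assume $s_{k}\to\infty$ (if $\{s_{k}\}$ were bounded the hypothesis would be automatic, by continuity of $t\mapsto f_{t}(x_{0})$ on compact intervals, and hence vacuous; when $\{s_{k}\}$ is unbounded one passes to a subsequence tending to infinity). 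Put $n_{k}=\lceil s_{k}\rceil$, so $n_{k}\to\infty$ and $n_{k}-s_{k}\in[0,1)$. Using $f_{n_{k}}(x_{0})=f_{s_{k}}\big(f_{n_{k}-s_{k}}(x_{0})\big)$ and nonexpansiveness,
\[
 d\big(x_{0},f_{n_{k}}(x_{0})\big)\le d\big(x_{0},f_{s_{k}}(x_{0})\big)+d\big(f_{s_{k}}(x_{0}),f_{n_{k}}(x_{0})\big)\le d\big(x_{0},f_{s_{k}}(x_{0})\big)+\varphi(n_{k}-s_{k}),
\]
and the right-hand side is bounded (by the hypothesis and $M_{1}$). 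Hence, setting $g:=f_{1}$, the orbit $\{g^{n}(x_{0})\}_{n\ge 0}=\{f_{n}(x_{0})\}_{n\ge 0}$ has the bounded subsequence $\{f_{n_{k}}(x_{0})\}_{k}$ (the $n_{k}$ take infinitely many values since $n_{k}\to\infty$).

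Now the classical Ca\l ka theorem applies to $g=f_{1}$ and shows that $\{f_{n}(x_{0}):n\ge 0\}$ is bounded; by properness its closure $C$ is compact. To finish, write an arbitrary $t\ge 0$ as $t=n+r$ with $n=\lfloor t\rfloor$ and $r\in[0,1]$; then $f_{t}(x_{0})=f_{r}\big(f_{n}(x_{0})\big)\in f_{r}(C)$, so $O(x_{0})\subseteq\bigcup_{0\le r\le 1}f_{r}(C)$, which is compact by Lemma \ref{compactK} (with $t_{0}=1$). In particular $O(x_{0})$ is bounded, as claimed.

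I expect the only genuinely substantial ingredient to be the classical Ca\l ka theorem itself, through which total boundedness of bounded subsets (guaranteed here by properness) is really used; the passage from continuous to integer time is routine, the one point deserving care being the comparison of $f_{s_{k}}(x_{0})$ with $f_{\lceil s_{k}\rceil}(x_{0})$ through the semigroup law. If instead a self-contained proof were wanted, one could try to imitate Ca\l ka's argument directly in continuous time: assuming $O(x_{0})$ unbounded while $\liminf_{t\to\infty}\varphi(t)<\infty$, one picks in the totally bounded set $\{f_{t}(x_{0}):\varphi(t)\le\liminf\varphi+1\}$ two times $t<t'$ with $d\big(f_{t}(x_{0}),f_{t'}(x_{0})\big)$ as small as desired, whence $d\big(f_{s+(t'-t)}(x_{0}),f_{s}(x_{0})\big)$ remains small for every $s\ge t$; exploiting this approximate periodicity together with the recurrence of $\varphi$ near its lower limit is the delicate step, and it is precisely there that total boundedness, rather than mere boundedness, is indispensable.
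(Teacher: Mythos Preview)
The paper does not actually prove Theorem~\ref{calkaC}; it is quoted from \cite{HuWi2}, so there is no in-text argument to compare against. Your reduction to the classical discrete Ca\l ka theorem via the time-$1$ map $g=f_{1}$, followed by an appeal to Lemma~\ref{compactK} to pass from integer times back to the full orbit, is correct and is the natural strategy; it is very likely the one used in \cite{HuWi2} as well.

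One small wording issue: your parenthetical about bounded $\{s_{k}\}$ is phrased a bit misleadingly. If $\{s_{k}\}$ were bounded, the hypothesis is indeed automatically satisfied, but that does not make the statement ``vacuous''---it would then assert that \emph{every} orbit is bounded, which is false in general. The theorem is therefore only meaningful under the implicit convention that $\{s_{k}\}$ is unbounded (equivalently, after passing to a subsequence, $s_{k}\to\infty$), and you proceed correctly under that reading. Apart from this cosmetic point, the argument is complete; the estimate $d\big(f_{s_{k}}(x_{0}),f_{n_{k}}(x_{0})\big)\le \varphi(n_{k}-s_{k})\le M_{1}$ via the semigroup law and nonexpansiveness is exactly the right bridge between the given times $s_{k}$ and the integers $n_{k}$.
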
  

Now, we can prove the main results of this section. 

\begin{theorem} \label{main_inftycomp}
Let $(Y,d)$ be a $(1,\kappa)$-quasi-geodesic space satisfying Axiom $1'$ and Axiom $4'$, and suppose that for every  $\zeta \in \partial Y$ and $z_{0}\in Y$ the intersection of horoballs' closures $\bigcap_{r\in \mathbb{R}}\overline{F_{z_{0}}(\zeta ,r)}^{\overline{d}}$ consists of a single point. If $S=\{f_{t}:Y\rightarrow Y\}_{t\geq 0}$ is a one-parameter continuous semigroup of nonexpansive mappings on $Y$ without bounded orbits, and there exists $t_0>0$ such that $f_{t_0}:Y \rightarrow Y$ is a compact mapping, then there exists $\xi \in \partial Y$ such that the semigroup $S$ converges uniformly on bounded sets of $Y$ to $\xi$.
\end{theorem}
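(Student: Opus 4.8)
The plan is to reduce the continuous (semigroup) statement to the discrete one, namely Theorem~\ref{collinf} applied to the single compact nonexpansive map $f_{t_0}$, and then to use the semigroup structure together with a suitable compactness/Cał ka-type argument to upgrade convergence along the discrete orbit $\{f_{nt_0}\}$ to uniform convergence of the whole family $\{f_t\}_{t\ge 0}$. First I would observe that $f_{t_0}$ is a compact nonexpansive self-map of $Y$, and that it has no bounded orbits: indeed if $\{f_{nt_0}(x_0)\}$ were bounded for some $x_0$, then by the semigroup counterpart of the Cał ka theorem (Theorem~\ref{calkaC}, after noting that a bounded subset of $Y$ has compact $\overline d$-closure by Axiom~$1'$, so $Y$ behaves ``properly'' on bounded sets, or more directly because $f_{t_0}$ is compact) the full orbit $O(x_0)=\{f_t(x_0):t\ge0\}$ would be bounded, contradicting the hypothesis. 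Hence Theorem~\ref{collinf} gives a point $\xi\in\partial Y$ such that $f_{nt_0}\to\xi$ uniformly on bounded sets of $Y$ as $n\to\infty$.

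Next I would promote this to the statement that $f_t\to\xi$ as $t\to\infty$, uniformly on bounded sets. Fix a bounded set $B\subset Y$ and a point $w\in Y$. For $t\ge t_0$ write $t=nt_0+s$ with $n\in\N$ and $s\in[0,t_0)$. Because each $f_s$ is nonexpansive, for $x\in B$ we have $f_t(x)=f_{nt_0}(f_s(x))$, and the points $f_s(x)$ with $x\in B$, $s\in[0,t_0]$ form a bounded set: this is where I would invoke Lemma~\ref{compactK} (with $C=\overline B^{\,\overline d}$, which is compact by Axiom~$1'$ since $B$ is bounded) to conclude that $\widetilde B=\{f_s(x):0\le s\le t_0,\ x\in B\}$ is a compact, hence bounded, subset of $Y$. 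Since $f_{nt_0}\to\xi$ uniformly on the bounded set $\widetilde B$, we get $\overline d(f_t(x),\xi)=\overline d(f_{nt_0}(f_s(x)),\xi)\le\sup_{y\in\widetilde B}\overline d(f_{nt_0}(y),\xi)\to0$ as $t\to\infty$ (equivalently $n\to\infty$), uniformly in $x\in B$ and in the choice of $s$. This yields exactly the desired uniform convergence of the semigroup $S$ to $\xi$ on bounded sets.

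I would then address the one slightly delicate point: the ``no bounded orbits'' hypothesis is stated for the semigroup, and I must make sure it really forbids bounded orbits of the discrete iteration $n\mapsto f_{nt_0}$. This is immediate since $\{f_{nt_0}(x_0)\}\subset O(x_0)$, so if the discrete orbit were unbounded we would be fine, and if it were bounded the full orbit would be bounded by Theorem~\ref{calkaC}; either way the hypothesis transfers. One should also check that Theorem~\ref{collinf} is genuinely applicable, i.e. that the horoball-intersection condition assumed here is precisely the one assumed there — it is, verbatim — so no extra work is needed.

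The main obstacle, and the step requiring the most care, is the passage from discrete to continuous convergence, specifically guaranteeing that the ``remainder blocks'' $\{f_s(x):0\le s\le t_0,\ x\in B\}$ stay inside a fixed bounded (indeed compact) set independent of $t$; Lemma~\ref{compactK} is tailored for exactly this, but one must be careful to take the $\overline d$-closure of $B$ as the compact set $C$ and to use Axiom~$1'$ to know that the $\overline d$-closure of a bounded set is compact and disjoint from $\partial Y$, so that $\widetilde B$ is again a bounded subset of $Y$ on which the uniform convergence $f_{nt_0}\to\xi$ from Theorem~\ref{collinf} can be invoked. Everything else is a routine $t=nt_0+s$ decomposition combined with nonexpansiveness.
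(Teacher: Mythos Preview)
Your overall strategy---reduce to the discrete map $f_{t_0}$ via Theorem~\ref{collinf} and then upgrade along the decomposition $t=nt_0+s$---is exactly the paper's approach. However, there is a genuine error in how you manufacture the compactness you need. You repeatedly assert that ``a bounded subset of $Y$ has compact $\overline d$-closure by Axiom~$1'$'', and you use this both to justify applying Theorem~\ref{calkaC} (which requires a proper space) and to feed $C=\overline{B}^{\,\overline d}$ into Lemma~\ref{compactK} (which requires $C$ compact). Axiom~$1'$ says nothing of the sort: it only guarantees that the $\overline d$-closure of a bounded set misses $\partial Y$ and therefore coincides with the $d$-closure. The whole point of the primed axioms, as the paper stresses, is to accommodate \emph{non-proper} spaces, so bounded sets need not have compact closure, and your applications of both Theorem~\ref{calkaC} and Lemma~\ref{compactK} fail as written.

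The paper repairs both places. For the Ca\l ka step, one observes that for $t\ge t_0$ one has $f_t(y)=f_{t_0}(f_{t-t_0}(y))\in\overline{f_{t_0}(Y)}^{\,\overline d}$, so by compactness of $f_{t_0}$ the orbit closure $\overline{O(y)}^{\,d}$ is itself a proper metric space on which the semigroup acts; Ca\l ka's theorem can then legitimately be applied there to get $d(f_{nt_0}(y),y)\to\infty$. For the remainder set $K=\bigcup_{0\le s\le t_0}f_s(D)$ with $D$ bounded, the paper does \emph{not} try to make $D$ compact. Instead it applies Lemma~\ref{compactK} only to the singleton $C=\{x_0\}$, which is trivially compact, to conclude that $\{f_s(x_0):0\le s\le t_0\}$ is compact and hence bounded; nonexpansiveness then gives $f_s(D)\subset B(f_s(x_0),r)$ whenever $D\subset B(x_0,r)$, so $K$ is bounded. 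With these two corrections your outline goes through and coincides with the paper's proof.
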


\begin{proof}
	Fix $t_0>0$. We choose a bounded set $D \subset Y$, and define the set 
	\begin{equation*}
	K=\{f_{s}(x):0\leq s\leq t_{0},\;x\in D\}=\bigcup_{0\leq s\leq
		t_{0}}f_{s}(D)\subset Y.
	\end{equation*} 
We note that $K$ is bounded. Indeed, since $D$ is bounded, there exist $x_0 \in Y$ and $r>0$ such that $D \subset B(x_0,r)$. By nonexpansivity of $f_s$ we get $f_s(D) \subset B(f_s(x_0),r)$ for any $s>0$. It follows from boundedness of $D$ that
	\begin{equation*}
	K = \bigcup_{0\leq s\leq t_{0}}f_{s}(D) \subset \bigcup_{0\leq s\leq t_{0}} B(f_s(x_0),r).
	\end{equation*} 
 It follows from Lemma \ref{compactK} that the set $\{f_s(x_0) : \, 0 \leq s \leq t_0 \}$ is compact and hence bounded. Then there exist $y_0 \in Y$ and $r_1>0$ such that $\{f_s(x_0) : \, 0 \leq s \leq t_0 \} \subset B(y_0,r_1).$ Hence  $
	K = \bigcup_{0\leq s\leq t_{0}} B(f_s(x_0),r) \subset B(y_0,r+r_1)$
is bounded. Fix $y \in Y$ and note that $
	f_{nt_0}(y) = f^n_{t_0}(y)$ for any $n \in \N$.
	Furthermore, $f_t(y) = f_{t_0}(f_{t - t_0}(y)) \in \{\overline{f_{t_0}(Y)}\}^{\overline{d}}$ for any $t \geq t_0$. Since the map  $f_{t_0}$ is compact, the $\overline{d}$-closure of the orbit $O(y)= \{f_{t}(y): t \geq t_0\}$ is a compact subset of $\overline{Y}$.  Next, consider a $d$-closed and bounded set $B \subset Y$. It follows from Axiom $1'$ that  $B$ is $\overline{d}$-closed. Then the set $
	\overline{O(y)}^{\overline{d}} \cap B = \overline{O(y)}^{d} \cap B
	$ is compact in  $\overline{Y}$ and hence compact in $Y$.
That is, $(\overline{O(y)}^{d},d)$ is proper, 
$f_t: \overline{O(y)}^{d} \to \overline{O(y)}^{d}$ for $t \geq 0$, and by  Całka's theorem and nonexpansivity of $f_{t_0}$ we get  $$d(f_{nt_0}(y),y)\rightarrow \infty, \quad n\rightarrow \infty. $$ It follows from Theorem \ref{collinf} that
	\begin{equation}\label{zbcomp}
	\sup_{y\in K}\,\overline{d}(f_{nt_{0}}(y),\xi )\rightarrow 0, 
	\end{equation}%
as $n\rightarrow \infty $. We note that for every $x\in C$, $t>0$ such that $t = nt_0+s$ and  $n\in \mathbb{N}$, $s\in \lbrack 0,t_{0})$, we have	\begin{equation*}
	f_{t}(x)=f_{nt_{0}+s}(x)=f_{nt_{0}}(f_{s}(x))\in f_{nt_{0}}(K).
	\end{equation*}%
	Therefore by  (\ref{zbcomp}), 
	\begin{equation*}
	\sup_{x\in C}\overline{d}(f_{t}(x),\xi )=\sup_{x\in C}\overline{d}%
	(f_{nt_{0}}(f_{s}(x)),\xi )\leq \sup_{y\in K}\overline{d}(f_{nt_{0}}(y),\xi
	)\rightarrow 0.
	\end{equation*}%
It follows that $\sup_{x\in C}\overline{d}(f_{t}(x),\xi )\rightarrow 0,$ as $t\rightarrow \infty$. 
\end{proof}

 Lemma \ref{a3}, combining with Theorem \ref{main_inftycomp}, immediately leads to the following corollary.

\begin{corollary}
Let $(Y,d)$ be a $(1,\kappa)$-quasi-geodesic space satisfying Axiom $1'$ and Axiom $3'$. If  $S=\{f_{t}:Y\rightarrow Y\}_{t\geq 0}$ is a one-parameter continuous semigroup of nonexpansive mappings on $Y$ without bounded orbits, and there exists $t_0>0$ such that the mapping $f_{t_0}:Y \rightarrow Y$ is compact, then there exists $\xi \in \partial Y$ such that the semigroup $S$ converges uniformly on bounded sets of $Y$ to $\xi$.
\end{corollary}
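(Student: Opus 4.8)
The plan is to derive this corollary directly from Theorem \ref{main_inftycomp} by verifying that its hypotheses are all satisfied. Theorem \ref{main_inftycomp} requires three things about $(Y,d)$: that it be a $(1,\kappa)$-quasi-geodesic space, that it satisfy Axiom $1'$ and Axiom $4'$, and that for every $\zeta \in \partial Y$ and $z_0 \in Y$ the intersection $\bigcap_{r \in \mathbb{R}} \overline{F_{z_0}(\zeta,r)}^{\overline{d}}$ be a single point. The corollary assumes Axiom $1'$ and Axiom $3'$ (and the $(1,\kappa)$-quasi-geodesic property), so the only gap to bridge is showing that Axiom $3'$ forces the single-point condition on horoball closures.

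First I would invoke Lemma \ref{a3}: since $(Y,d)$ satisfies Axiom $3'$, for any $z_0 \in Y$ and $\zeta \in \partial Y$ we have $\bigcap_{r \in \mathbb{R}} \overline{F_{z_0}(\zeta,r)} = \{\zeta\}$. One should be slightly careful about which closure Lemma \ref{a3} refers to versus the $\overline{d}$-closure appearing in Theorem \ref{main_inftycomp}; since by Axiom $1'$ the relative topology on $Y$ coincides with the metric topology and $\overline{Y}$ carries the metric $\overline{d}$, the closure in $\overline{Y}$ is exactly the $\overline{d}$-closure, so the two formulations agree. Thus the single-point hypothesis of Theorem \ref{main_inftycomp} holds automatically.

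Having checked that, the remaining hypotheses transfer verbatim: $(Y,d)$ is $(1,\kappa)$-quasi-geodesic and satisfies Axiom $1'$ by assumption, and it satisfies Axiom $4'$ because — as noted in Section 2 — Axioms $1'$ and $3'$ imply Axiom $4'$. The semigroup $S = \{f_t\}_{t \geq 0}$ is a one-parameter continuous semigroup of nonexpansive mappings without bounded orbits, and $f_{t_0}$ is compact for some $t_0 > 0$; these are precisely the hypotheses on $S$ in Theorem \ref{main_inftycomp}. Applying that theorem yields $\xi \in \partial Y$ with $S$ converging uniformly on bounded sets to $\xi$, which is the conclusion. I do not anticipate any genuine obstacle here — the corollary is a packaging of Theorem \ref{main_inftycomp} together with Lemma \ref{a3} — so the only point requiring a moment of care is the topological identification of the two notions of closure mentioned above, and confirming that Axiom $3'$ indeed implies Axiom $4'$ as stated in the preliminaries.
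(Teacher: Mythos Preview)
Your proposal is correct and matches the paper's own argument: the corollary is obtained by combining Lemma \ref{a3} with Theorem \ref{main_inftycomp}, using that Axioms $1'$ and $3'$ imply Axiom $4'$. Your additional remarks about identifying the two notions of closure are appropriate caution but do not change the approach.
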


The next theorem shows that if there exists in the semigroup $S=\{f_{t}:Y\rightarrow Y\}_{t\geq 0}$ a compact and contractive mapping $f_{t_{0}}:Y\rightarrow Y$, then we get a slightly stronger result.

\begin{theorem} Let $(Y,d)$ be a $(1,\kappa)$-quasi-geodesic space satisfying Axiom $1'$ and Axiom $4'$, and suppose that for every  $\zeta \in \partial Y$ and $z_{0}\in Y$, the intersection of horoballs' closures $\bigcap_{r\in \mathbb{R}}\overline{F_{z_{0}}(\zeta ,r)}^{\overline{d}}$ consists of a single point. If $S=\{f_{t}:Y\rightarrow Y\}_{t\geq 0}$ is a one-parameter continuous semigroup of nonexpansive mappings on $Y$, and there exists $t_{0}>0$ such that a mapping $f_{t_{0}}:Y\rightarrow Y$ is compact and contractive, then there exists $\xi \in \overline{Y}$ such that the semigroup $S$ converges uniformly on compact sets of $Y$ to $\xi$.
\end{theorem}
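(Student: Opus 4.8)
The plan is to dichotomize according to whether the semigroup $S$ has a bounded orbit. If no orbit of $S$ is bounded, Theorem~\ref{main_inftycomp} applies verbatim (the hypotheses on $(Y,d)$, on the horoballs, and the compactness of $f_{t_0}$ are all in force) and produces $\xi\in\partial Y\subset\overline Y$ with $\sup_{x\in C}\overline d(f_t(x),\xi)\to 0$ on every bounded, hence every compact, set $C$; this case needs nothing further. So assume there is $y_0\in Y$ with $O(y_0)=\{f_t(y_0):t\ge 0\}$ bounded, and set $Z=\overline{O(y_0)}^{\,\overline d}$. Since $\{f_t(y_0):0\le t\le t_0\}$ is compact (Lemma~\ref{compactK} with $C=\{y_0\}$) and $\{f_t(y_0):t\ge t_0\}\subset\overline{f_{t_0}(Y)}^{\,\overline d}$ is relatively $\overline d$-compact, $Z$ is $\overline d$-compact; boundedness of $O(y_0)$ and Axiom~$1'$ give $Z\subset Y$, so $Z$ is a nonempty compact metric space for $d$, and continuity of $f_t$ together with $f_t(O(y_0))\subset O(y_0)$ gives $f_t(Z)\subset Z$ for all $t\ge 0$.

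First I would extract a common fixed point. The restriction $f_{t_0}|_Z\colon Z\to Z$ is a contractive self-map of a nonempty compact metric space, so by Edelstein's theorem (equivalently, because $z\mapsto d(z,f_{t_0}(z))$ attains its minimum on $Z$) it has a fixed point $p\in Z$, which is its only fixed point in all of $Y$ since a contractive map admits at most one. For any $t\ge 0$, $f_{t_0}(f_t(p))=f_t(f_{t_0}(p))=f_t(p)$, so $f_t(p)=p$; thus $p$ is a common fixed point of $S$. Consequently every orbit is bounded: $d(f_t(x),p)=d(f_t(x),f_t(p))\le d(x,p)$, and $t\mapsto d(f_t(x),p)$ is non-increasing, so it converges to some $\ell(x)\ge 0$.

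Next I would show $\ell(x)=0$ for all $x\in Y$, the step where compactness of $f_{t_0}$ and contractivity genuinely interact. The tail $\{f_t(x):t\ge t_0\}$ lies in the $\overline d$-compact set $\overline{f_{t_0}(Y)}^{\,\overline d}$, so along some $t_k\to\infty$ we have $f_{t_k}(x)\to q$ in $\overline d$; since this sequence is $d$-bounded (by $d(x,p)$), Axiom~$1'$ forces $q\in Y$ and the convergence to hold in $d$, whence $d(q,p)=\ell(x)$. Applying $f_{t_0}$ and using $f_{t_k+t_0}(x)=f_{t_0}(f_{t_k}(x))\to f_{t_0}(q)$ with $d(f_{t_k+t_0}(x),p)\to\ell(x)$ gives $d(f_{t_0}(q),p)=\ell(x)=d(q,p)$; since $f_{t_0}(p)=p$, contractivity forces $q=p$, so $\ell(x)=d(q,p)=0$. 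Hence $d(f_t(x),p)\to 0$, and by Axiom~$1'$ also $\overline d(f_t(x),p)\to 0$, for every $x\in Y$.

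It remains to upgrade pointwise convergence to uniform convergence on compact sets, which I expect to be the most delicate point, again purely because $Y$ need not be proper. Fix a compact $C\subset Y$ and suppose $\sup_{x\in C}\overline d(f_t(x),p)\not\to 0$: there are $\varepsilon>0$, $t_k\to\infty$ and $x_k\in C$ with $\overline d(f_{t_k}(x_k),p)\ge\varepsilon$, and a subsequence with $x_k\to x\in C$. Nonexpansiveness gives $d(f_{t_k}(x_k),f_{t_k}(x))\le d(x_k,x)\to 0$, and for $t_k\ge t_0$ both $f_{t_k}(x_k)$ and $f_{t_k}(x)$ lie in the $\overline d$-compact set $\overline{f_{t_0}(Y)}^{\,\overline d}$, so after a further subsequence $f_{t_k}(x_k)\to u$ and $f_{t_k}(x)\to v$ in $\overline d$. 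By the previous paragraph $v=p$, while $\overline d(u,p)\ge\varepsilon$ gives $u\ne p$. Since $d(f_{t_k}(x_k),p)\le d(x_k,p)$ stays bounded (as $C$ is $d$-bounded), $u$ cannot lie on $\partial Y$ by Axiom~$1'$, so $u\in Y$ and $f_{t_k}(x_k)\to u$ in $d$; but then $d(u,p)\le d(u,f_{t_k}(x_k))+d(f_{t_k}(x_k),f_{t_k}(x))+d(f_{t_k}(x),p)\to 0$, contradicting $u\ne p$. Hence $\sup_{x\in C}\overline d(f_t(x),p)\to 0$, and $\xi=p\in Y\subset\overline Y$ works; combined with the first case this proves the theorem. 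The recurring obstacle throughout is non-properness, handled each time by the same device: compactness of $f_{t_0}$ supplies $\overline d$-subsequential limits, boundedness of orbits keeps those limits off $\partial Y$, and Axiom~$1'$ passes freely between $d$ and $\overline d$.
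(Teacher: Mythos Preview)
Your proof is correct and follows the same overall dichotomy as the paper, but the bounded-orbit case is organized differently. The paper works at the discrete level throughout: it extracts a convergent subsequence $f_{n_kt_0}(y_0)\to z_0$, uses the non-increasing sequence $d_n=d(f_{nt_0}(y_0),f_{(n+1)t_0}(y_0))$ together with contractivity to show $z_0$ is a fixed point of $f_{t_0}$, and then deduces $f_{nt_0}(y)\to z_0$ for every $y$; uniform convergence on compact $C$ is obtained via an $\varepsilon$-net argument on the auxiliary set $K=\bigcup_{0\le s\le t_0}f_s(C)$, and only at the very end is the continuous parameter recovered by writing $t=nt_0+s$. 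You instead invoke Edelstein's theorem on the compact invariant set $Z$ to get the fixed point in one stroke, then immediately promote it to a common fixed point of the whole semigroup via $f_{t_0}(f_t(p))=f_t(p)$; this lets you argue directly with the continuous, non-increasing function $t\mapsto d(f_t(x),p)$ and avoid the discretize-then-bridge step and the set $K$ entirely. Your contradiction argument for uniformity is equivalent in strength to the paper's $\varepsilon$-net but more self-contained. The paper's route has the advantage of being parallel to the unbounded case (both funneled through the $K$-construction), while yours makes the role of the common fixed point explicit and is slightly shorter.
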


\begin{proof}
	In the first step, suppose that the semigroup  $S=\{f_{t}:Y \rightarrow Y\}_{t\geq 0}$ has unbounded orbits. Then the conclusion follows directly from Theorem \ref{main_inftycomp}. Therefore, we assume that $\{f_{t}(y)\}_{t\geq 0}$ is bounded for every $y\in Y$. By assumption, there exists $t_{0} >0$ such that  $f_{t_{0}}:Y\rightarrow Y$ is a compact and contractive mapping. Fix  $y_0 \in Y$.  Note that$ f_{nt_0}(y_0) = f^n_{t_0}(y_0)$, $n \in \N$.
	Since the mapping $f_{t_0}$ is compact and
	$\{f_{nt_0}(y_0)\}_{n\in \N}$ is bounded
    we have that $\overline{\{f_{nt_0}(y_0)\}}^{\overline{d}}
    _{n\in \N}    
    =\overline{\{f_{nt_0}(y_0)\}}^{{d}}_{n\in \N}$  is compact in $\overline{Y}$ and hence also in $Y$. It follows that there exists a subsequence $\{f_{n_{k}t_{0}}(y_0)\}$ of $\{f_{nt_{0}}(y_0)\}$ converging to some $z_0 \in Y$. Since $f_{t_0}$ is nonexpansive, the sequence	
	$$
	d_{n}=d(f_{nt_{0}}(y_0),f_{nt_{0}+t_0}(y_0)), \; \; n=1,2,\ldots. 
	$$
	is nonincreasing and hence it converges to some $\eta $, as $n\rightarrow \infty $. Therefore,
	\begin{equation*}
	\eta \leftarrow
	d_{n_{k}}=d(f_{n_{k}t_{0}}(y_0),f_{n_{k}t_{0}+t_0}(y_0))\rightarrow d(z_0,f_{t_0}(z_0))
	\end{equation*}%
	and
	\begin{equation*}
	\eta \leftarrow
	d_{n_{k}+1}=d(f_{n_{k}t_{0}+t_{0}}(y_0),f_{n_{k}t_{0}+t_{0}+t_0}(y_0))\rightarrow
	d(f_{t_{0}}(z_0),f_{2t_{0}}(z_0)).
	\end{equation*}
	Hence 
	\begin{equation}\label{convtoetaa} 
	\eta =d(z_0,f_{t_0}(z_0))=d(f_{t_{0}}(z_0),f_{2t_{0}}(z_0)).
	\end{equation}
	Since $f_{t_{0}}$ is contractive, if $z_0$ and $f_{t_0}(z_0)$ were distinct points, we would have $$d(f_{t_{0}}(z_0),f_{2t_{0}}(z_0))<d(z_0,f_{t_0}(z_0)),$$
	a contradiction with (\ref{convtoetaa}). Thus $f_{t_0}(z_0)=z_0.$ Moreover, since the sequence $\{d(f_{nt_{0}}(y_0),z_0)\}$ is decreasing
	and $f_{n_{k}t_{0}}(y_0)\rightarrow z_0\in Y$, we have also $f_{nt_{0}}(y_0)\rightarrow z_0,$ as $n\rightarrow \infty$. Now, notice that if we choose any $y \in Y$, then the previous reasoning shows that $f_{nt_0}(y)$ converges to a fixed point of $f_{t_0}$ for every $y \in Y$. However, a contractive mapping has at most one fixed point. Therefore  $f_{nt_{0}}(y)\rightarrow z_0,$ as $n \to \infty$, for all $y \in Y$. \\
	
Now we show uniform convergence on compact sets. We choose a compact set $C \subset Y$ and define the set $K$ as in the proof of Theorem \ref{main_inftycomp}:
\begin{equation*}
	K = \{f_s(x) : 0 \leq s \leq t_0, \, x \in C \}.
\end{equation*}
Fix $\varepsilon >0$. Since by Lemma \ref{compactK} the set $K$ is compact, note that for some $y_{1},\ldots,y_{n}\in K$, $$K\subset \bigcup_{i=1}^{n}B \Big(y_{i},\frac{\varepsilon }{2}\Big).$$
From the first part of the proof, there exists $n_{0}$ such that for any $n\geq n_{0},$ $$\sup_{i=1,\ldots ,n}d(f_{nt_{0}}(y_{i}),z_0)<%
\frac{\varepsilon }{2}.$$ We choose $y\in K$, then there exists $i$ such that
$d(y,y_{i})<\frac{\varepsilon }{2}$. By nonexpansivity  of $f_{nt_0}$ we get   
\begin{align*}
	d(f_{nt_{0}}(y),z_0)& \leq
	d(f_{nt_{0}}(y),f_{nt_{0}}(y_{i}))+d(f_{nt_{0}}(y_{i}),z_0) \\
	& \leq d(y,y_{i})+d(f_{nt_{0}}(y_{i}),z_0) \\
	& <\frac{\varepsilon }{2}+\frac{\varepsilon }{2}=\varepsilon .
\end{align*}%
Since $y\in K$ was chosen arbitrarily, 
\begin{equation}\label{concs}
	\sup_{y\in K}d(f_{nt_{0}}(y),z_0)\rightarrow 0,
\end{equation}%
as $n\rightarrow \infty $. Note that for every $x\in C$ and $t>0$ such that $t = nt_0+s$, $n\in \mathbb{N}$, $s\in \lbrack 0,t_{0})$, we have  
\begin{equation*}
	f_{t}(x)=f_{nt_{0}+s}(x)=f_{nt_{0}}(f_{s}(x))\in f_{nt_{0}}(K).
\end{equation*}
Therefore by (\ref{concs}),
\begin{equation*}
	\sup_{x\in C}d(f_{t}(x),z_0)=\sup_{x\in C}d(f_{nt_{0}}(f_{s}(x)),z_0)\leq
	\sup_{y\in K}d(f_{nt_{0}}(y),z_0).
\end{equation*}%
It follows that $\sup_{x\in C}d(f_{t}(x),z_0)\rightarrow 0,$ as $t\rightarrow
\infty $, and the proof is complete.
\end{proof}

By Lemma \ref{a3}, we immediately obtain the following conclusion.  

\begin{corollary}\label{main_coro}
Let $(Y,d)$ be a $(1,\kappa)$-quasi-geodesic space satisfying Axiom $1'$ and Axiom $3'$. If  $S=\{f_{t}:Y\rightarrow Y\}_{t\geq 0}$ is a one-parameter continuous semigroup of nonexpansive mappings on $Y$ and there exists $t_0>0$ such that the mapping $f_{t_0}:Y \rightarrow Y$ is  compact and contractive, then there exists $\xi \in \overline{Y}$ such that the semigroup $S$ converges uniformly on compact sets of $Y$ to $\xi$.
\end{corollary}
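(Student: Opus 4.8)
The plan is to obtain the statement as an immediate consequence of the Theorem stated just above it, whose hypotheses are that $(Y,d)$ be a $(1,\kappa)$-quasi-geodesic space satisfying Axiom $1'$ and Axiom $4'$, that for every $\zeta\in\partial Y$ and $z_0\in Y$ the intersection $\bigcap_{r\in\mathbb{R}}\overline{F_{z_0}(\zeta,r)}^{\overline{d}}$ consist of a single point, and that there exist $t_0>0$ with $f_{t_0}$ compact and contractive. Since the corollary already assumes the semigroup structure and the existence of a compact contractive $f_{t_0}$, the only thing to verify is that Axioms $1'$ and $3'$ supply the two remaining ingredients: Axiom $4'$ and the single-point horoball condition.

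For the first ingredient I would invoke the observation recorded in Section~2 that Axioms $1'$ and $3'$ together imply Axiom $4'$; thus the space $(Y,d)$ in the corollary automatically satisfies Axiom $4'$. For the second ingredient I would apply Lemma~\ref{a3}: because $(Y,d)$ satisfies Axiom~$3'$, for every $z_0\in Y$ and $\zeta\in\partial Y$ we have $\bigcap_{r\in\mathbb{R}}\overline{F_{z_0}(\zeta,r)}=\{\zeta\}$, that is, the intersection of the closures of the big horoballs reduces to the single point $\zeta$. One should note here that the closure in Lemma~\ref{a3} and the closure in the Theorem are both taken with respect to $\overline{d}$, so the two formulations agree verbatim and no further argument is needed to reconcile them.

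Having verified that all hypotheses of the preceding Theorem hold, I would then apply that Theorem directly to $S$ and to the compact contractive map $f_{t_0}$, producing a point $\xi\in\overline{Y}$ such that $S$ converges uniformly on compact subsets of $Y$ to $\xi$, which is precisely the conclusion of the corollary.

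I do not anticipate a genuine obstacle here: all the substantive work — the reduction to the discrete iterates $f_{nt_0}=f_{t_0}^{\,n}$, the distinction between bounded and unbounded orbits, the extraction of a fixed point of $f_{t_0}$ via its contractivity in the bounded-orbit case, the passage from $f_{nt_0}$ back to the full semigroup through the decomposition $t=nt_0+s$, and the uniform-on-compacta estimate built on compactness of $K=\bigcup_{0\le s\le t_0}f_s(C)$ from Lemma~\ref{compactK} — is carried out in the Theorem above. The only mild points of care are confirming the implication ``Axioms~$1'$ and $3'$ $\Rightarrow$ Axiom~$4'$'' is available and that Lemma~\ref{a3}'s conclusion matches the horoball hypothesis of the Theorem, both of which are routine.
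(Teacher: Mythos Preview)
Your proposal is correct and mirrors the paper's own argument exactly: the paper simply notes that the corollary follows immediately from Lemma~\ref{a3} (together with the implication Axioms~$1'$ and $3'\Rightarrow$ Axiom~$4'$) applied to the preceding Theorem. No additional steps are needed.
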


Let $V$ be Banach space and $D$ a bounded convex domain of $V$. Consider 
$\partial D = \overline{D} \setminus D$, where $\overline{D}$ denotes the closure of $D$ in the norm topology. We will always assume that $(D,d)$ is a $(1,\kappa)$-quasi geodesic metric space, whose topology coincides
with the norm topology.  Recall that $D\subset V$ is \textit{strictly convex} if for any $z,w\in \overline{D}$
the open segment $$(z,w)=\{sz+(1-s)w:s\in (0,1)\}$$ lies in $D$.

The following lemma was shown in \cite{HuWi1}.

\begin{lemma}
\label{A3'}	Suppose that $D$ is a convex domain of $V$ and $(D,d)$
	satisfies condition (\ref{C}). If $\{x_{n}\}$ and $\{y_{n}\}$ are sequences
	in $D\,$converging to $\xi $ and $\eta $, respectively, in $\partial D$, and
	if for some $w\in D,$
	\[
	d(x_{n},y_{n})-d(y_{n},w)\rightarrow -\infty ,
	\]
	then $[\xi ,\eta ]\subset \partial D.$
\end{lemma}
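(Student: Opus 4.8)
The plan is to argue by contradiction, using condition (\ref{C}) at the midpoint of $[\xi,\eta]$ to manufacture a lower bound on $d(x_n,y_n)-d(y_n,w)$ that rules out this difference tending to $-\infty$.

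First I would suppose $[\xi,\eta]\not\subset\partial D$. Since $\overline{D}$ is convex, $[\xi,\eta]\subset\overline{D}$, so there is a point $p\in(\xi,\eta)\cap D$. Because $D$ is open and convex and $\xi,\eta\in\overline{D}$, the elementary convexity fact that $tb+(1-t)a\in D$ for all $t\in(0,1]$ whenever $a\in\overline{D}$ and $b\in D$ gives $(\xi,p]\subset D$ and $[p,\eta)\subset D$; hence $(\xi,\eta)=(\xi,p]\cup[p,\eta)\subset D$, and in particular the midpoint $m=\tfrac12\xi+\tfrac12\eta$ lies in $D$.

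Next I would set $z_n=\tfrac12 x_n+\tfrac12 y_n\in D$. Since $x_n\to\xi$ and $y_n\to\eta$ in the norm, $z_n\to m$ in the norm, and because the $d$-topology on $D$ coincides with the norm topology this yields $d(z_n,m)\to 0$; consequently $d(z_n,w)\le d(z_n,m)+d(m,w)$ is bounded by a constant $M<\infty$ for all $n$. Applying condition (\ref{C}) with $s=\tfrac12$, $x=x_n$, $y=y_n$, $z=y_n$ gives
\[
d(z_n,y_n)\le\max\{d(x_n,y_n),\,d(y_n,y_n)\}=d(x_n,y_n),
\]
and combining this with the triangle inequality $d(y_n,w)\le d(y_n,z_n)+d(z_n,w)$ and the bound $d(z_n,w)\le M$ gives
\[
d(x_n,y_n)\ \ge\ d(z_n,y_n)\ \ge\ d(y_n,w)-M .
\]
Thus $d(x_n,y_n)-d(y_n,w)\ge -M$ for every $n$, contradicting the hypothesis that this difference tends to $-\infty$, and therefore $[\xi,\eta]\subset\partial D$.

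The estimates with the triangle inequality and condition (\ref{C}) are routine, and the boundedness of $d(z_n,w)$ follows at once from the coincidence of the two topologies on $D$. The only step that needs a little care is the geometric reduction in the second paragraph — passing from ``some interior point of $(\xi,\eta)$ belongs to $D$'' to ``the midpoint of $[\xi,\eta]$ belongs to $D$'' — which rests on the standard convexity fact about half-open segments joining a closure point to an interior point; I do not expect a genuine obstacle here.
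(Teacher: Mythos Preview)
The paper does not give an in-text proof of this lemma; it simply cites \cite{HuWi1}. Your argument is correct and self-contained: the contradiction via the midpoint $m=\tfrac12\xi+\tfrac12\eta$, the bound $d(z_n,w)\le M$ coming from the coincidence of the $d$-topology and the norm topology on $D$ (which is a standing assumption just before the lemma), and the chain $d(x_n,y_n)\ge d(z_n,y_n)\ge d(y_n,w)-M$ from condition~(\ref{C}) and the triangle inequality all go through exactly as you wrote. The geometric reduction in your second paragraph (any interior point of $(\xi,\eta)$ in $D$ forces the whole open segment into $D$) is the standard half-open-segment fact for open convex sets and needs no further justification.
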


Recall that $x_{0}$ is called a fixed point of a mapping 
$f:D\rightarrow D$ if $f(x_{0})=x_{0}.$
Define
\begin{eqnarray*}
	\mathrm{Fix}(f) =\{x\in D:f(x)=x\}. 
\end{eqnarray*}%

Corollary \ref{main_coro} and Lemma \ref{A3'} now yields the following result.

\begin{theorem}\label{main_stB}
Suppose that $D$ is a bounded strictly convex domain of a Banach space and $(D,d)$ is $(1,\kappa)$-quasi-geodesic space satisfying Axiom $1'$ and condition ($C$). If $S=\{f_{t}:D\rightarrow D\}_{t\geq 0}$ is 
a one-parameter continuous semigroup of nonexpansive mappings on $Y$, and there exists $t_{0}>0$ such that the mapping $f_{t_{0}}:Y\rightarrow Y$ is compact with $\Fixx (f_{t_0}) = \emptyset,$ then there exists $\xi \in \partial D$ such that the semigroup $S$ converges uniformly on bounded sets of $D$ to $\xi$.
\end{theorem}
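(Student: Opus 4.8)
The plan is to reduce Theorem \ref{main_stB} to Theorem \ref{main_inftycomp} (equivalently, to Corollary \ref{main_coro} in the unbounded-orbit case), so the task is essentially to verify the hypotheses of those results in the present, more concrete, setting. There are two things to check: first, that the horoball-closure intersection condition holds in a bounded strictly convex domain satisfying Axiom $1'$ and condition ($C$); second, that the compact mapping $f_{t_0}$ with $\Fixx(f_{t_0})=\emptyset$ forces the semigroup $S$ to have unbounded orbits, so that Theorem \ref{main_inftycomp} applies and yields a boundary point $\xi\in\partial D$.

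First I would argue that $(D,d)$ satisfies Axiom $4'$ and, more precisely, the single-point horoball condition. The key input is Lemma \ref{A3'}: if $\{x_n\},\{y_n\}$ converge to $\xi,\eta\in\partial D$ and $d(x_n,y_n)-d(y_n,w)\to-\infty$ for some $w\in D$, then the whole segment $[\xi,\eta]\subset\partial D$. Combined with strict convexity of $D$ — which says every open segment between points of $\overline D$ lies inside $D$ — the inclusion $[\xi,\eta]\subset\partial D$ is only possible if $\xi=\eta$. Hence $(D,d)$ satisfies Axiom $3'$, and since Axioms $1'$ and $3'$ imply Axiom $4'$, we also have Axiom $4'$. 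Moreover, Lemma \ref{a3} shows that under Axiom $3'$ the intersection $\bigcap_{r\in\mathbb R}\overline{F_{z_0}(\zeta,r)}^{\overline d}=\{\zeta\}$, so the single-point horoball hypothesis of Theorem \ref{main_inftycomp} is satisfied. (Equivalently one may simply invoke Corollary \ref{main_coro}, stated for Axioms $1'$ and $3'$, but since that corollary is phrased for the compact-and-contractive case, I would instead use Theorem \ref{main_inftycomp} with Axioms $1'$, $4'$ and the horoball condition, all of which we have just verified.)

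Next I would handle the passage from ``$\Fixx(f_{t_0})=\emptyset$'' to ``$S$ has no bounded orbits.'' Suppose for contradiction that some orbit $\{f_t(y_0):t\ge0\}$ is bounded. Then, exactly as in the first paragraph of the proof of the previous theorem, $\{f_{nt_0}(y_0)=f_{t_0}^n(y_0)\}_{n\in\mathbb N}$ is a bounded sequence, and since $f_{t_0}$ is compact its $\overline d$-closure is a compact subset of $\overline Y$; by Axiom $1'$ this closure avoids $\partial D$ and so is a compact subset of $D$. Thus $(\overline{O(y_0)}^{\,d},d)$ is a proper metric space invariant under every $f_t$, and Theorem \ref{calkaC} (Ca\l ka's theorem for semigroups) gives that the whole orbit $O(y_0)$ is bounded. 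On the bounded, hence totally bounded, orbit closure we may then extract a convergent subsequence $f_{n_kt_0}(y_0)\to z_0\in D$; running the monotonicity argument on $d_n=d(f_{nt_0}(y_0),f_{(n+1)t_0}(y_0))$, which is nonincreasing by nonexpansiveness, we get a fixed point equation in the limit. Here, however, $f_{t_0}$ is only assumed nonexpansive (not contractive), so $d(z_0,f_{t_0}(z_0))=d(f_{t_0}(z_0),f_{2t_0}(z_0))$ does not immediately give $f_{t_0}(z_0)=z_0$. To close this gap I would instead invoke the discrete Wolff--Denjoy dichotomy behind Theorem \ref{collinf}: a compact nonexpansive self-map of $(D,d)$ either has a fixed point or has all orbits unbounded (this is the standard Ca\l ka/Beardon trichotomy — boundedness of one orbit forces boundedness of all, and a compact nonexpansive map with a bounded orbit has a fixed point by a convexity/asymptotic-center argument on the proper orbit closure). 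Since $\Fixx(f_{t_0})=\emptyset$, no orbit of $f_{t_0}$ is bounded, contradicting boundedness of $\{f_{t_0}^n(y_0)\}$. Hence $S$ has no bounded orbits.

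With both hypotheses in place, Theorem \ref{main_inftycomp} applies directly to $(D,d)$, the semigroup $S$, and the compact mapping $f_{t_0}$: it produces $\xi\in\partial D$ such that $\sup_{x\in B}\overline d(f_t(x),\xi)\to0$ as $t\to\infty$ for every bounded $B\subset D$, which is exactly the claimed uniform convergence on bounded sets. The main obstacle is the middle step: extracting ``no bounded orbits'' from ``$f_{t_0}$ has no fixed point'' requires knowing that a fixed-point-free compact nonexpansive map cannot have a bounded orbit, i.e. the fixed-point existence half of the Ca\l ka-type theory on the proper orbit closure. If that precise statement is not already available in the cited sources, one would supply it as a short lemma: on the proper space $(\overline{O(y_0)}^{\,d},d)$, boundedness of the orbit plus nonexpansiveness and condition ($C$) (convexity of balls) lets one take an asymptotic center or a minimal invariant set and conclude $f_{t_0}$ has a fixed point there — contradicting $\Fixx(f_{t_0})=\emptyset$. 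Everything else is a routine transcription of the arguments already carried out in the proofs of Theorem \ref{main_inftycomp} and the subsequent theorem.
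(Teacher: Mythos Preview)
Your overall strategy matches the paper's: first use strict convexity together with Lemma~\ref{A3'} to upgrade condition~(\ref{C}) to Axiom~$3'$ (hence Axiom~$4'$ and, via Lemma~\ref{a3}, the single-point horoball condition), and then argue by contradiction that a bounded orbit would force a fixed point of $f_{t_0}$. The first half of your write-up is fine and in fact more explicit than the paper, which just points to Lemma~\ref{A3'} before stating the theorem.

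The one place where your proposal is genuinely incomplete is the fixed-point step. You correctly abandon the monotonicity-of-$d_n$ argument (which needs contractiveness) and reach for an asymptotic-center/minimal-invariant-set idea, but you frame it as taking place on the proper orbit closure $(\overline{O(y_0)}^{\,d},d)$. That space is not convex in the Banach-space sense, so neither Schauder nor a Chebyshev-center argument is available there, and condition~(\ref{C}) alone does not give unique asymptotic centers. The paper instead works in $D$: it forms the asymptotic center
\[
A=\Big\{x\in D:\limsup_{t\to\infty}d(x,f_t(y))=r(\{f_t(y)\})\Big\}=\bigcap_{\varepsilon>0}A_\varepsilon,
\]
shows $A\neq\emptyset$ via the nested compact sets $\overline{f_{t_0}(A_\varepsilon)}^{\|\cdot\|}$ (this is where compactness of $f_{t_0}$ enters), observes that $A$ is convex by condition~(\ref{C}), norm-closed, bounded, and $f_{t_0}$-invariant with $\overline{f_{t_0}(A)}^{\|\cdot\|}$ compact, and then applies the Schauder fixed-point theorem to obtain a fixed point of $f_{t_0}$, contradicting $\mathrm{Fix}(f_{t_0})=\emptyset$. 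Once you make this step explicit (Schauder on $A\subset D$, not on the orbit closure), your argument coincides with the paper's.
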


\begin{proof}
In the first case, if $\{f_t:D \rightarrow D\}_{t \geq 0}$ has unbounded orbits, then the conclusion follows directly from Theorem \ref{main_inftycomp}. Thus we can assume that the orbit $\{f_{t}(y)\}_{t \geq 0}$ is bounded for some (hence for any) $y \in D$. Let $r(\{f_t(y)\}) = \inf_{z \in D} \limsup_{t \to \infty} d(z,f_{t}(y))$, and note that the asymptotic center
	\begin{equation*}
	A = \{x \in D: \limsup_{t \to \infty} d(x,f_t(y))=r(\{f_t(y)\}) \}
	\end{equation*}
is nonempty. Indeed, $A = \bigcap_{\varepsilon >0} A_{\varepsilon}$, where
	\begin{equation*}
	A_{\epsilon} = \{x \in D : \limsup_{t \to \infty} d(x,f_t(y))\leq r(\{f_t(y)\})+ \epsilon \}.
	\end{equation*}
	Since the mapping $f_{t_0}$ is nonexpansive we note that $f_{t_0}(A_{\varepsilon}) \subset A_{\varepsilon}$. What is more,  $A_{\epsilon}$ is bounded and closed with respect to $d$ and with respect to the norm. Since $f_{t_0}$ is compact,
	\begin{equation*}
	\emptyset \neq \bigcap_{\varepsilon >0}\overline{f_{t_0}(A_{\varepsilon })}^{||\cdot||}\subset
	\bigcap_{\varepsilon >0}A_{\varepsilon }=A.
	\end{equation*}
	Furthermore,    
	\begin{equation*}
	f_{t_0}(A) = f_{t_0}(\bigcap_{\epsilon>0}A_{\epsilon}) \subset \bigcap_{\epsilon>0} f_{t_0}(A_{\epsilon}) \subset \bigcap_{\epsilon>0} A_{\epsilon} = A,
	\end{equation*}
	which means that $f_{t_0}(A) \subset A$. The set $A$ is also bounded and closed in $||\cdot||$ and $\overline{f_{t_0}(A)}^{||\cdot||}$ is compact. Since by assumption $D$ is convex, and the metric space $(D,d)$ satisfies condition $C$, $A$ is convex, too. Therefore, it follows from the Schauder fixed-point theorem that $f_{t_0}$ has a fixed point, which
	is a contradiction.
	
\end{proof}

We said before that any bounded and convex domain in a Banach space can be equipped with the Hilbert metric and become a complete geodesic space satisfying Axiom $1'$ and condition $(C)$. Hence and from Theorem \ref{main_stB} we have the following corollary.

\begin{corollary}
	Assume that $D$ is a bounded strictly convex domain in  a Banach space. If $S=\{f_{t}:D\rightarrow D\}_{t\geq 0}$ is a one-parameter continuous semigroup of nonexpansive mappings with respect to the Hilbert metric $d_{H}$, and there exists $t_0 > 0$ such that $\Fixx (f_{t_0}) = \emptyset$, and the mapping $f_{t_0}$ is compact, then there exists $\xi \in \partial D$ such that the semigroup $S$ converges uniformly on bounded sets of $D$ to $\xi$.
\end{corollary}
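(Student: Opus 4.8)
The plan is to derive this corollary directly from Theorem \ref{main_stB} by checking that the Hilbert metric $d_H$ on a bounded strictly convex domain $D$ in a Banach space satisfies all the hypotheses of that theorem. As recalled in Section 2, $(D,d_H)$ is a complete geodesic space (hence in particular $(1,\kappa)$-quasi-geodesic for any $\kappa>0$) which satisfies Axiom $1'$ (by \cite{Nu}, Theorem 4.13), and $d_H$ satisfies condition (\ref{C}) since the $d_H$-balls in $D$ are convex (see \cite{HuWi1}). The boundary $\partial D$ used in Theorem \ref{main_stB} is exactly $\overline{D}\setminus D$ with $\overline{D}$ the norm closure, which matches the ambient space from Axiom $1'$ in the Hilbert-metric setting.

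Given these facts, I would argue as follows. Let $S=\{f_t:D\to D\}_{t\geq 0}$ be a one-parameter continuous semigroup of $d_H$-nonexpansive mappings, and suppose $t_0>0$ is such that $f_{t_0}$ is compact and $\Fixx(f_{t_0})=\emptyset$. Since $D$ is bounded and strictly convex and $(D,d_H)$ is a $(1,\kappa)$-quasi-geodesic space satisfying Axiom $1'$ and condition (\ref{C}), all hypotheses of Theorem \ref{main_stB} are met. Applying that theorem yields a point $\xi\in\partial D$ such that $S$ converges uniformly on bounded sets of $D$ to $\xi$, which is precisely the assertion of the corollary.

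I do not expect any serious obstacle here: the content is entirely in verifying that the Hilbert-metric space fits the axiomatic framework, and every needed property has already been established or cited in Section 2. The only minor point to be careful about is that the semigroup in the statement of Theorem \ref{main_stB} is written as acting on ``$Y$'' in places while the domain is called $D$; in our application $Y=D$ with $d=d_H$, so the identification is immediate. Thus the corollary follows at once, and no additional computation is required.

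\begin{proof}
Any bounded and convex domain $D$ in a Banach space equipped with the Hilbert metric $d_{H}$ is a complete geodesic space, hence a $(1,\kappa)$-quasi-geodesic space for every $\kappa>0$, it satisfies Axiom $1'$ (see \cite{Nu}, Theorem 4.13), and $d_{H}$ satisfies condition (\ref{C}) because the $d_{H}$-balls are convex (see \cite{HuWi1}). Since $D$ is in addition strictly convex, all the assumptions of Theorem \ref{main_stB} are fulfilled with $Y=D$ and $d=d_{H}$. Therefore, if $S=\{f_{t}:D\rightarrow D\}_{t\geq 0}$ is a one-parameter continuous semigroup of $d_{H}$-nonexpansive mappings and there exists $t_{0}>0$ such that $f_{t_{0}}$ is compact with $\Fixx(f_{t_{0}})=\emptyset$, then by Theorem \ref{main_stB} there exists $\xi\in\partial D$ such that the semigroup $S$ converges uniformly on bounded sets of $D$ to $\xi$.
\end{proof}
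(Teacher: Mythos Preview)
Your proof is correct and follows exactly the paper's own approach: the corollary is obtained directly from Theorem \ref{main_stB} by observing that a bounded convex domain with the Hilbert metric is a complete geodesic space satisfying Axiom $1'$ and condition (\ref{C}). There is nothing to add.
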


As discussed in Section 2, the Kobayashi distance satisfies all the conditions to formulate the next corollary.

\begin{corollary}
Assume that $D$ is a bounded strictly convex domain in a complex Banach space. If $S=\{f_{t}:D\rightarrow D\}_{t\geq 0}$ is a one-parameter continuous semigroup of nonexpansive mappings with respect to the Kobayashi distance $k_{D}$, and there exists $t_0 > 0$ such that $\Fixx (f_{t_0}) = \emptyset$, and the mapping $f_{t_0}$ is compact, then there exists $\xi \in \partial D$ such that the semigroup $S$ converges uniformly on bounded sets of $D$ to $\xi$.
\end{corollary}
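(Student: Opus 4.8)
The plan is to observe that the statement is an immediate instance of Theorem \ref{main_stB}, so the whole proof consists of checking that the Kobayashi metric space $(D,k_D)$ fits the framework of that theorem. First I would recall, as explained in Section 2, that a bounded convex domain $D$ in a complex Banach space equipped with the Kobayashi distance is a $(1,\kappa)$-quasi-geodesic space (for any $\kappa>0$) whose metric topology coincides with the norm topology, so $\partial D=\overline{D}\setminus D$ is the relevant boundary.

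Next I would verify the two structural hypotheses on $(D,d)=(D,k_D)$. Axiom $1'$ holds: by the cited results (Harris; \cite{KRS}) $(D,k_D)$ is a complete metric space that is open and dense in $(\overline{D},\|\cdot\|)$ with coinciding relative topology, and the $\arg\tanh$ estimate recorded in Section 2 shows $k_D(x_n,w)\to\infty$ whenever $x_n\to\xi\in\partial D$ and $w\in D$. Condition ($C$) holds as well, since the Kobayashi distance has convex balls, as noted at the end of Section 2. Together with the hypothesis that $D$ is strictly convex, all the assumptions of Theorem \ref{main_stB} on the ambient space are met. (Internally, Theorem \ref{main_stB} uses Lemma \ref{A3'} plus strict convexity to obtain Axiom $3'$, hence Axiom $4'$, hence via Lemma \ref{a3} the single-point horoball condition needed in the unbounded-orbit case; none of this requires extra verification here.)

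Finally, the semigroup hypotheses transfer verbatim: $S=\{f_t\}_{t\ge 0}$ is a one-parameter continuous semigroup of $k_D$-nonexpansive mappings, $f_{t_0}$ is compact, and $\Fixx(f_{t_0})=\emptyset$. Applying Theorem \ref{main_stB} then produces a point $\xi\in\partial D$ such that $S$ converges uniformly on bounded sets of $D$ to $\xi$, which is the assertion. I do not expect a genuine obstacle: the mathematical content is entirely in Theorem \ref{main_stB}, and the only point deserving a line of care is confirming that the abstract boundary appearing in the axioms is exactly the norm boundary $\partial D$, which is guaranteed by the remark following Axiom $1'$ (so that $k_D$-bounded sets have $\overline{d}$-closure disjoint from $\partial D$).
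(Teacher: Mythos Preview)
Your proposal is correct and matches the paper's own treatment: the corollary is stated immediately after Theorem \ref{main_stB} with the one-line justification that ``the Kobayashi distance satisfies all the conditions'' from Section~2, and you carry out exactly that verification. One small inaccuracy worth fixing: the $\arg\tanh$ estimate you invoke gives only $k_D(x_n,w)\ge \arg\tanh(\|x_n-w\|/\diam D)$, and the right-hand side need not blow up as $x_n\to\xi\in\partial D$; Axiom~$1'$ for $(D,k_D)$ is instead the content of the cited references \cite{Ha,KRS} (the $\arg\tanh$ bound is used in the paper for Axiom~$5'$, not Axiom~$1'$).
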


\section{Attractor of a nonexpansive mapping}

Let $V$ be Banach space and $D \subset V$, $f:D \rightarrow D$ and $y\in D$. Then the set of accumulation points (in the
norm topology) of the sequence $\{f^{n}(y)\}$ is called the \textit{omega
	limit set of }$y$ and is denoted by $\omega _{f}(y)$. In other words,
\begin{equation*}
	\omega_{f}(y)=\{x\in \overline{D}: \exists_{\{n_k\}}
	\text{  an increasing sequence such that}%
	\lim_{k \to \infty} f^{n_k}(y)=x \}.
\end{equation*}%
The \textit{attractor }of $f$ is defined as 
\begin{equation*}
	\Omega _{f}=\bigcup_{y\in D}\omega _{f}(y).
\end{equation*}%
Let $D\subset V$ be a
convex domain. Given $\xi \in \partial D,F\subset \partial D$, set
\begin{eqnarray*}
	\ch (\xi ) &=&\{x\in \partial D:[x,\xi ]\subset \partial D\}, \\
	\ch (F) &=&\bigcup_{\xi \in F} \ch (\xi ).
\end{eqnarray*}

We will need the following lemma (see Lemma 5.2, \cite{HuWi1}).

\begin{lemma}
	\label{lem1_comp}Suppose that $Y$ is a $(1,\kappa )$-quasi-geodesic space
	satisfying Axiom 1' and $f:Y\rightarrow Y$ is a compact nonexpansive mapping
	without a bounded orbit. Then there exists $\xi \in \partial Y$ such that
	for every $z_{0}\in Y$, $r\in \mathbb{R}$ and a sequence of natural numbers $
	\{a_{n}\}$, there exists $z\in Y$ and a subsequence $\{a_{n_{k}}\}$ of $
	\{a_{n}\}$ such that $f^{a_{n_{k}}}(z)\in F_{z_{0}}(\xi ,r)$ for every $k\in
	\mathbb{N}$. Moreover, if $Y$ satisfies Axiom 4', then $\xi \in
	\bigcap_{r\in \mathbb{R}}\overline{F_{z_{0}}(\xi ,r)}.$
\end{lemma}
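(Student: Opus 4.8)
The plan is to use the compactness of $f$ twice: once, via Ca\l ka's theorem, to force every orbit to run out to $\partial Y$, and once to single out one boundary point $\xi$ from the shrinking images $f^{n}(Y)$; the point $z$ and the subsequence are then extracted by an Arzel\`a--Ascoli/diagonal argument whose limit is pinned to $\xi$ by Axiom $4'$.

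Two reductions come first. For any $x\in Y$ the tail of the orbit $O(x)$ lies in $f(Y)$, so $\overline{O(x)}^{\overline d}$ is compact; intersecting with a $d$-bounded $d$-closed subset of $Y$ and using Axiom $1'$ shows $(\overline{O(x)}^{d},d)$ is a proper metric space which $f$ maps into itself, so by Ca\l ka's classical theorem $d(f^{n}(x),x)\to\infty$ for every $x$, and consequently every $\overline d$-accumulation point of an orbit lies in $\partial Y$ (an accumulation point in $Y$ would give a bounded subsequence, hence, again by Ca\l ka, a bounded orbit). Secondly, from $|d(w,z_{0})-d(w,z_{0}')|\le d(z_{0},z_{0}')$ one gets $F_{z_{0}}(\xi,r)\subseteq F_{z_{0}'}(\xi,r+d(z_{0},z_{0}'))$; since $r$ runs over all of $\R$, it is enough to prove both assertions for one fixed pole, which we now fix and call $z_{0}$.

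Next, the point $\xi$. The sets $\overline{f^{n}(Y)}^{\overline d}$ are nonempty, compact and decreasing ($f^{n+1}(Y)=f^{n}(f(Y))\subseteq f^{n}(Y)$), so $Y_{\infty}:=\bigcap_{n\ge1}\overline{f^{n}(Y)}^{\overline d}$ is a nonempty compact set. It cannot lie inside $Y$: a decreasing sequence of nonempty compacts Hausdorff-converges to its intersection, so if $Y_{\infty}\subseteq Y$ every orbit would $\overline d$-accumulate in $Y_{\infty}\subseteq Y$, contradicting the preceding paragraph. Hence we may take $\xi\in Y_{\infty}\cap\partial Y$; the feature we need is that $\xi$ is a $\overline d$-limit of points of $f^{n}(Y)$ for \emph{every} $n$. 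Now fix $r\in\R$ and an increasing $\{a_{n}\}\subseteq\N$. The scheme is: for each $n$, using $\xi\in\overline{f^{a_{n}}(Y)}^{\overline d}$ together with the $(1,\kappa)$-quasi-geodesic structure, produce a point $p_{n}\in Y$ such that $f^{a_{n}}(p_{n})$ is both $\overline d$-close to $\xi$ \emph{and} lies in $F_{z_{0}}(\xi,r)$; then, the images $f^{a_{n}}(p_{n})$ being confined to the compact set $\overline{f(Y)}^{\overline d}$ and the preimages being chosen in a fixed bounded (indeed compact) set, extract a subsequence so that both $p_{n_{k}}\to z\in Y$ and $f^{a_{n_{k}}}(p_{n_{k}})\to\xi$; by nonexpansiveness $d(f^{a_{n_{k}}}(z),f^{a_{n_{k}}}(p_{n_{k}}))\le d(z,p_{n_{k}})\to0$, so passing to a further subsequence $f^{a_{n_{k}}}(z)\to\eta\in\partial Y$, and since this distance stays bounded, Axiom $4'$ gives $\eta=\xi$; finally, letting $k\to\infty$ in the horoball inequality for $f^{a_{n_{k}}}(p_{n_{k}})$ and using the $1$-Lipschitz dependence of $y\mapsto d(y,w)-d(w,z_{0})$ on $y$ yields $f^{a_{n_{k}}}(z)\in F_{z_{0}}(\xi,r)$ (proving the step for a slightly smaller radius, which is harmless since $r$ was arbitrary).

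For the ``moreover'', under Axiom $4'$ run this with $\{a_{n}\}=\N$ and each $r$: the resulting points $f^{n_{k}}(z)$ both lie in $F_{z_{0}}(\xi,r)$ and converge to $\xi$, so $\xi\in\overline{F_{z_{0}}(\xi,r)}^{\overline d}$ for every $r$, i.e. $\xi\in\bigcap_{r}\overline{F_{z_{0}}(\xi,r)}$, and the pole reduction carries this to all $z_{0}$. I expect the genuinely hard point to be the first step of the scheme --- producing, for all $n$ at once, approximants $f^{a_{n}}(p_{n})$ of $\xi$ of arbitrarily large horoball depth with the $p_{n}$ confined to a compact set: plain nonexpansiveness only gives membership in $F_{z_{0}}(\xi,\,\mathrm{const})$, so to reach arbitrarily negative $r$ one must use the compactness of $f$ (through the structure of $Y_{\infty}$ and the quasi-geodesic approach to $\xi$) in an essential way. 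Everything else --- Ca\l ka's theorem, convergence of nested compacts, the extraction, and limits of $1$-Lipschitz inequalities --- is routine.
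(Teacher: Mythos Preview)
The paper does not prove this lemma; it is quoted from \cite{HuWi1}, Lemma~5.2. The argument there follows the Beardon--Karlsson template: fix $y\in Y$, set $d_n=d(f^n(y),y)$, use Karlsson's observation to select a subsequence $\{n_i\}$ with $d_m<d_{n_i}$ whenever $m<n_i$, pass by compactness of $f$ to $f^{n_i}(y)\to\xi\in\partial Y$, and then combine the inequality $d(f^m(y),f^{n_i}(y))\le d_{n_i-m}<d_{n_i}$ with $(1,\kappa)$-quasi-geodesics joining $z_0$ to $f^{n_i}(y)$ to force iterates into $F_{z_0}(\xi,r)$ for any prescribed $r$.

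Your outline has a genuine gap, and you name it yourself: the ``first step of the scheme'' --- producing approximants $p_n$ with $f^{a_n}(p_n)$ simultaneously $\overline d$-close to $\xi$ and in arbitrarily deep horoballs, with the $p_n$ trapped in a compact set --- is never carried out, and it is the entire content of the lemma. The difficulty is structural and tied to your choice of $\xi$: taking $\xi\in Y_\infty\cap\partial Y$ with $Y_\infty=\bigcap_n\overline{f^n(Y)}^{\overline d}$ only tells you that $\xi$ is $\overline d$-approximable from each $f^n(Y)$, and this gives no control whatsoever on horofunction levels --- a sequence can converge to a boundary point in $\overline d$ while $\liminf_{w\to\xi}[d(\cdot,w)-d(w,z_0)]$ stays bounded below. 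The cited proof works precisely because $\xi$ is \emph{not} an arbitrary boundary point of $Y_\infty$ but the limit of a specific orbit along record-breaking times; the record property $d_{n_i-m}<d_{n_i}$ is exactly what, together with nonexpansiveness and the quasi-geodesic estimates, drives the horoball radius to $-\infty$. Your construction discards this mechanism, so there is no evident way to complete the scheme. (A minor separate issue: you invoke Axiom~$4'$ already in proving the first assertion, which is stated under Axiom~$1'$ alone; that use is in fact superfluous, since horoball membership passes from $f^{a_{n_k}}(p_{n_k})$ to $f^{a_{n_k}}(z)$ directly via the $1$-Lipschitz dependence you already note.)
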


The next theorem is a generalization of Theorem 4.10 in \cite{HuWi1} which in turn is a generalization of the Abate and Raissy result \cite[Theorem 6]{AbRa}, who proved it for bounded convex domains with the Kobayashi distance.

\begin{theorem}\label{chuog}
	Let $D$ be a bounded convex domain in a Banach space  $V$ and let $(D,d)$ be a complete $(1,\kappa)$-quasi geodesic space satisfying Axiom $1'$ and condition (\ref{C}), whose topology coincides with the norm topology. If $f:D \rightarrow D$ is a compact and nonexpansive mapping without bounded orbits, then there exists $\xi \in \partial D$ such that
	\begin{equation*}
		\Omega_f \subset \ch (\bigcap_{r\in \mathbb{R}} \, \overline{F_{z_0}(\xi,r)}^{||\cdot||})
	\end{equation*}
	for some $z_0 \in D$.
\end{theorem}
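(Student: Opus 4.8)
The goal is to show that every accumulation point of every orbit lies in $\ch$ of the point(s) at which the distinguished horoballs' closures concentrate. Let me set $H = \bigcap_{r \in \mathbb{R}} \overline{F_{z_0}(\xi,r)}^{\|\cdot\|}$ where $\xi$ is the boundary point supplied by Lemma \ref{lem1_comp}. First I would fix an arbitrary $y \in D$ and an accumulation point $\eta \in \omega_f(y)$; so there is an increasing sequence $\{n_k\}$ with $f^{n_k}(y) \to \eta$ in norm. Since $f$ has no bounded orbits and is compact, Axiom $1'$ forces $\eta \in \partial D$ (a bounded set has norm-closure disjoint from $\partial D$, while $\{f^{n_k}(y)\}$ cannot be bounded by the Całka-type argument already used in this paper, or else its norm-limit would be an interior accumulation point that contradicts unboundedness of the orbit). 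If $\eta \in H$ there is nothing to prove, so assume $\eta \notin H$; then in particular I must show $[\eta, \zeta] \subset \partial D$ for every $\zeta \in H$, i.e. $\eta \in \ch(\zeta)$.

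Second, the heart of the argument is to compare the orbit point $f^{n_k}(y)$ against a point $w_k$ of the orbit of some $z$ that Lemma \ref{lem1_comp} places deep inside the horoball $F_{z_0}(\xi, r)$. Concretely, fix $\zeta \in H$ and fix $r \in \mathbb{R}$; apply Lemma \ref{lem1_comp} with the sequence $\{a_n\} = \{n_k\}$ to obtain $z \in D$ and a subsequence $\{n_{k_j}\}$ with $f^{n_{k_j}}(z) \in F_{z_0}(\xi, r)$ for all $j$. Now I use nonexpansivity: $d(f^{n_{k_j}}(y), f^{n_{k_j}}(z)) \le d(y,z) =: c_0$, a constant independent of $j$. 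Meanwhile $f^{n_{k_j}}(z)$ lies in a horoball, so $\liminf_{w \to \xi}\big(d(f^{n_{k_j}}(z), w) - d(w, z_0)\big) \le r$, which lets me pick, for each $j$, a sequence $w \to \xi$ along which $d(f^{n_{k_j}}(z), w) - d(w, z_0)$ stays $\le r + 1$. Passing $f^{n_{k_j}}(y) \to \eta$ and $f^{n_{k_j}}(z)$ to (a subsequential norm-limit) $\zeta' \in \overline{F_{z_0}(\xi,r)}^{\|\cdot\|}$, and combining the bound $d(f^{n_{k_j}}(y), f^{n_{k_j}}(z)) \le c_0$ with the horoball inequality, I should be able to produce sequences $\{x_m\} \subset D$ with $x_m \to \eta$ and $\{\zeta'_m\} \subset D$ with $\zeta'_m \to \zeta'$, together with $w_m \to \xi$ and a uniform constant $c_1$ such that $d(x_m, w_m) - d(w_m, z_0) \le c_1$ while $d(x_m, \zeta'_m)$ stays bounded — and the point is that $\zeta'$ can be taken to be any element of $H$ by letting $r \to -\infty$ and using $\bigcap_r \overline{F_{z_0}(\xi,r)}^{\|\cdot\|} = H$ together with a diagonal argument.

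Third, once I have $x_m \to \eta$, an auxiliary sequence converging to $\zeta \in H$, and the estimate that the excess $d(x_m, w_m) - d(w_m, z_0)$ is bounded above while $w_m \to \xi$, I feed this into Lemma \ref{A3'} (with the roles of $\{x_n\}, \{y_n\}$ played by $\{x_m\}$ and a sequence $\{w_m\}$ approaching $\xi$, and $w = z_0$): the hypothesis $d(x_m, w_m) - d(w_m, z_0) \to -\infty$ would give $[\eta, \xi] \subset \partial D$ directly. To get the genuinely negative limit rather than merely a bounded one, I iterate the horoball-depth selection: for each $N$ choose $r = -N$, run Lemma \ref{lem1_comp} to land in $F_{z_0}(\xi, -N)$, and chain the nonexpansivity bound; a diagonalization over $N$ produces sequences along which the excess genuinely diverges to $-\infty$. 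Lemma \ref{A3'} then yields $[\eta, \xi] \subset \partial D$, i.e. $\eta \in \ch(\xi)$, and since by Lemma \ref{lem1_comp} (with Axiom $4'$, which holds as noted in Section 2 under Axioms $1'$ and $3'$, or is assumed) $\xi \in H$, a symmetric comparison between $f^{n_k}(z)$ and a point chosen deep in the horoball around an \emph{arbitrary} $\zeta \in H$ upgrades this to $[\eta, \zeta] \subset \partial D$ for all $\zeta \in H$.

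**The main obstacle.** The delicate point is the bookkeeping in Step 2 and 3: Lemma \ref{lem1_comp} gives, for each prescribed radius $r$ and each prescribed index sequence, only \emph{some} point $z$ and \emph{some} subsequence, with no uniformity in $z$ as $r \to -\infty$; so I cannot simply fix one $z$. The resolution is to never fix $z$ but instead build the final sequences $\{x_m\}$ by a nested/diagonal construction — for the $m$-th term, apply Lemma \ref{lem1_comp} with radius $-m$ to the tail of the already-chosen index subsequence, obtaining a new $z^{(m)}$ and a far-enough index $n_{k(m)}$, set $x_m = f^{n_{k(m)}}(y)$ and exploit $d(x_m, f^{n_{k(m)}}(z^{(m)})) \le \operatorname{diam} D$ (bounded uniformly since $D$ is bounded!). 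This uniform diameter bound is exactly what rescues the argument: it replaces the non-uniform $d(y,z)$ by a constant, so the excess $d(x_m, w_m) - d(w_m, z_0) \le -m + \operatorname{diam} D + 1 \to -\infty$, and Lemma \ref{A3'} applies. Everything else is routine manipulation of horoball inequalities and the norm/metric topology comparison granted by Axiom $1'$.
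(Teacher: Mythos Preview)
Your proposal contains a genuine gap in the ``main obstacle'' resolution. You claim $d(x_m, f^{n_{k(m)}}(z^{(m)})) \le \operatorname{diam} D$, but $D$ is bounded only in the \emph{norm}, not in the metric $d$; for the Hilbert or Kobayashi distance---the principal examples here---$\operatorname{diam}_d D = \infty$. Nonexpansivity gives only $d(f^{n_{k(m)}}(y), f^{n_{k(m)}}(z^{(m)})) \le d(y, z^{(m)})$, and since $z^{(m)}$ varies with $m$ while $(D,d)$ is unbounded, this right-hand side is not controlled. Your diagonalization therefore does not force the excess to $-\infty$, and the appeal to Lemma~\ref{A3'} collapses.

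The paper avoids this entirely by never varying $r$ and $z$ simultaneously. For a \emph{fixed} $r$, Lemma~\ref{lem1_comp} supplies a single $z$ and a subsequence with $f^{a_{n_k}}(z) \in F_{z_0}(\xi,r)$; by compactness of $\overline{f(D)}$ one may assume $f^{a_{n_k}}(z) \to \zeta_r \in \overline{F_{z_0}(\xi,r)}^{\|\cdot\|} \cap \partial D$. Now feed the pair $x_n := f^{a_{n_k}}(y) \to \eta$, $y_n := f^{a_{n_k}}(z) \to \zeta_r$ into Lemma~\ref{A3'}: nonexpansivity bounds $d(x_n,y_n) \le d(y,z)$, a constant, while Axiom~$1'$ forces $d(y_n, z_0) \to \infty$ because $\zeta_r \in \partial D$. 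Hence $d(x_n,y_n) - d(y_n,z_0) \to -\infty$ automatically---no horoball depth needed---and Lemma~\ref{A3'} yields $[\eta,\zeta_r] \subset \partial D$. Only \emph{afterwards} does one let $r \to -\infty$: the points $\zeta_r$ lie in the compact set $\partial D \cap \overline{f(D)}$, subconverge to some $\zeta \in \bigcap_r \overline{F_{z_0}(\xi,r)}^{\|\cdot\|}$, and the segment condition passes to the limit. This gives $\eta \in \ch(\zeta)$ for \emph{one} $\zeta \in H$, which is precisely what $\eta \in \ch(H)$ means---note that $\ch(H) = \bigcup_{\zeta \in H} \ch(\zeta)$, so you need the segment to lie in $\partial D$ for \emph{some} $\zeta \in H$, not for every $\zeta \in H$ as your plan states.
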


\begin{proof}
Fix $y \in D$ and a sequence of natural numbers $\{a_n\}$. Consider a $d$-closed and bounded set $B \subset D$. It follows from Axiom $1'$ that  
$\overline{O(y)}^{\overline{d}} \cap B = \overline{O(y)}^{d} \cap B
$ is compact in  $\overline{D}$ and hence also in $D$.
That is, $(\overline{O(y)}^{d},d)$ is proper and by the Całka theorem \cite[Theorem 2.1]{HuWi1} and nonexpansivity of $f$ we get  $d(f^n(y),y)\rightarrow \infty$, $n\rightarrow \infty. $ 
Suppose that $f^{a_n}(y) \rightarrow \eta \in \partial D$. It follows from Lemma \ref{lem1_comp} that we can choose $\xi \in \partial D$  and fix  $z_0 \in D$ and $r \in \R$ such that there exist $z \in D$ and a subsequence $\{a_{n_k}\}$ of $\{a_n\}$ for which $
		f^{a_{n_k}}(z) \in F_{z_0}(\xi, r)$,
 $k \in \N$. 
	Without loss of generality, we can assume that  
	$$
	f^{a_{n_k}}(z) \rightarrow \zeta_r \in \partial D \cap \overline{F_{z_0}(\xi,r)}^{||\cdot||}, \qquad k \to \infty.
	$$ 
	It follows from Lemma \ref{A3'} that $[\eta, \zeta_r] \subset \partial D$ i.e., for any $r \in \R$ there exists $\zeta_r \in \partial D \cap \overline{F_{z_0}(\xi,r)}^{||\cdot||}$ such that  $\eta \in \ch(\zeta_r)$.
Consider a descending sequence	$\{r_n\}$ such that $r_n \rightarrow - \infty$, as $n \to \infty$. Hence the sequence $\{\zeta_{r_n}\} \subset \partial D$ and the segments of ends in $\eta$ and $\zeta_r$ lie on the boundary. Note that $\zeta_{r_n} \in \overline{f(D)}$ for any $n \in N$. Hence and by compactness of $\partial D \cap \overline{f(D)}$, there is a subsequence $\{r_{n_k}\}$ of $\{r_n\}$ such that $\zeta_{r_{n_k}} \rightarrow \zeta \in \partial D$, as $k \to \infty$. Fix $k_0$. Note that for $k \geq k_0$,  
	\begin{equation*}
		\zeta_{r_{n_k}} \in \overline{F_{z_{0}}(\xi ,r_{n_{k}})}^{||\cdot||} \subset \overline{F_{z_{0}}(\xi ,r_{n_{k_0}})}^{||\cdot||}
	\end{equation*}
and hence
	\begin{equation*}
		\zeta \in \overline{F_{z_{0}}(\xi ,r_{n_{k_0}})}^{||\cdot||}.
	\end{equation*} 
	Since $k_0$ was chosen arbitrarily, 
	\begin{equation*}
		\zeta \in \bigcap_{k\in \mathbb{N}} \overline{F_{z_{0}}(\xi ,r_{n_k})}^{||\cdot||} =  \bigcap_{r\in \mathbb{R}} \overline{F_{z_{0}}(\xi ,r)}^{||\cdot||}.
	\end{equation*}
Fix $s \in [0,1]$ and note that
	\begin{equation*}
		||s \zeta_{r_{n_k}} + (1-s)\eta - (s \zeta +(1-s)\eta) || = s ||\zeta_{r_{n_k}} - \zeta || \rightarrow 0,
	\end{equation*}
	as $k \to \infty$, which means that $[\zeta, \eta] \in \partial D$. Therefore 
	\begin{equation*}
		\eta \in \ch (\bigcap_{r\in \mathbb{R}} \, \overline{F_{z_0}(\xi,r)}^{||\cdot||}).
	\end{equation*}
\end{proof}

A big horoball is not always a convex set. However, Abate and Raissy \cite{AbRa} proved that a big horoball considered in a bounded and convex domain with the Kobayashi distance is a star-shaped set with respect to the center of the horoball. We now present a generalization of this fact for all metric spaces satisfying condition (\ref{C}).

\begin{lemma}
	\label{gwC} Let $D$ be a bounded convex domain in a Banach space  $V$ and suppose that $(D,d)$ is $(1,\kappa)$-quasi geodesic space satisfying condition $(C)$, whose topology coincides with the norm topology. If $z_0 \in D$, $r > 0$ and $\xi \in \partial D$, then for every $\eta \in \overline{F_{z_0}(\xi,r)}^{||\cdot||}$ we have
	\begin{equation*}
		[\eta,\xi] \subset \overline{F_{z_0}(\xi,r)}^{||\cdot||}.
	\end{equation*}
\end{lemma}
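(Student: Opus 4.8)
The plan is to first prove a sharper statement, namely that $s\eta+(1-s)\xi\in F_{z_0}(\xi,r)$ for every $\eta\in F_{z_0}(\xi,r)$ and every $s\in(0,1]$, and then deduce the lemma by a routine approximation and closure argument. Observe that for such $\eta$ and $s$ the point $\eta_s:=s\eta+(1-s)\xi$ lies in $D$, since $D$ is open and convex, $\eta\in D$ and $\xi\in\overline{D}$. Moreover $\eta_s\to\xi$ in norm as $s\to 0^+$, so once we know $\eta_s\in F_{z_0}(\xi,r)$ for all $s\in(0,1]$ we get $\xi\in\overline{F_{z_0}(\xi,r)}^{||\cdot||}$ for free, and hence $[\eta,\xi]\subset\overline{F_{z_0}(\xi,r)}^{||\cdot||}$ whenever $\eta\in F_{z_0}(\xi,r)$. (The hypothesis $r>0$ plays no role here; the argument works for any $r\in\mathbb{R}$.)

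To prove $\eta_s\in F_{z_0}(\xi,r)$, I would argue as follows. Since $\eta\in F_{z_0}(\xi,r)$, there is a sequence $\{w_n\}\subset D$ with $w_n\to\xi$ in norm and $\limsup_{n\to\infty}\bigl(d(\eta,w_n)-d(w_n,z_0)\bigr)\leq r$. The obstruction is that $\xi\notin D$, so condition (\ref{C}) cannot be applied to $\eta_s$ directly; the device is to substitute the nearby points $w_n$ for $\xi$ inside the convex combination. Set $v_n:=s\eta+(1-s)w_n\in D$. Then $v_n\to\eta_s$ in norm, hence $d(v_n,\eta_s)\to 0$ because the metric topology of $(D,d)$ coincides with the norm topology. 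Applying condition (\ref{C}) with $x=\eta$ and $y=z=w_n$ yields $d(v_n,w_n)\leq\max\{d(\eta,w_n),d(w_n,w_n)\}=d(\eta,w_n)$, so by the triangle inequality
\[
d(\eta_s,w_n)-d(w_n,z_0)\leq d(\eta_s,v_n)+d(\eta,w_n)-d(w_n,z_0).
\]
Taking $\limsup_{n\to\infty}$ and using $d(\eta_s,v_n)\to 0$, we obtain $\limsup_{n\to\infty}\bigl(d(\eta_s,w_n)-d(w_n,z_0)\bigr)\leq r$; since $w_n\to\xi$, this gives $\liminf_{w\to\xi}\bigl(d(\eta_s,w)-d(w,z_0)\bigr)\leq r$, i.e. $\eta_s\in F_{z_0}(\xi,r)$.

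For the general case, take $\eta\in\overline{F_{z_0}(\xi,r)}^{||\cdot||}$ and choose $\eta_k\in F_{z_0}(\xi,r)$ with $\eta_k\to\eta$ in norm. By the previous step, $s\eta_k+(1-s)\xi\in\overline{F_{z_0}(\xi,r)}^{||\cdot||}$ for every $s\in[0,1]$ and every $k$; fixing $s$, letting $k\to\infty$, noting $s\eta_k+(1-s)\xi\to s\eta+(1-s)\xi$ in norm, and using that $\overline{F_{z_0}(\xi,r)}^{||\cdot||}$ is norm-closed, we conclude $s\eta+(1-s)\xi\in\overline{F_{z_0}(\xi,r)}^{||\cdot||}$, which is the assertion. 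I expect the analytic middle step to be the only real obstacle: one must check that the $\liminf_{w\to\xi}$ defining the big horoball genuinely survives the replacement of $\xi$ by $w_n$ inside the convex combination, and it is precisely there that convexity of $D$, condition (\ref{C}), and the agreement of the two topologies on $D$ are all needed.
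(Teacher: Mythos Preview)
Your proof is correct and follows essentially the same route as the paper: pick a sequence $w_n\to\xi$ realizing the $\liminf$ in the horoball definition, replace $\xi$ by $w_n$ in the convex combination, apply condition~(\ref{C}) with $y=z=w_n$ to get $d(s\eta+(1-s)w_n,w_n)\le d(\eta,w_n)$, use the coincidence of the $d$-topology and norm topology on $D$ to pass from $s\eta+(1-s)w_n$ to $s\eta+(1-s)\xi$, and finish by approximation on the closure. The paper's argument is the same in structure and detail; your write-up is slightly more explicit about why $\eta_s\in D$ and about the endpoint $\xi$ landing in the closure.
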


\begin{proof}
	Fix $\eta \in F_{z_0}(\xi,r)$ and choose a sequence $\{x_n\} \subset D$ converging to $\xi \in \partial D$ and such that the limit 
	\begin{equation*}
		\lim_{n \to \infty} d(\eta,x_n)-d(x_n,z_0) \leq r
	\end{equation*}
	exists. Fix $s \in (0,1)$. 
	It follows from condition (\ref{C}) that
	\begin{equation}\label{lem5C1} 
		\limsup_{n \to \infty} d(s\eta+(1-s)x_n,x_n) - d(x_n,z_0) \leq \lim_{n \to \infty} d(\eta,x_n) - d(x_n,z_0) \leq r.
	\end{equation}
	Note that
	\begin{equation*}
		||s \eta +(1-s)x_n - (s\eta +(1-s)\xi) || = (1-s)||x_n - \xi|| \rightarrow 0, \quad n \to \infty.
	\end{equation*}
	Since topology of $(D,d)$ and $(\overline{D},||\cdot ||)$ coincides on $D$ we get
	\begin{equation*}
		d(s \eta +(1-s)x_n,  s\eta +(1-s)\xi) \rightarrow 0,
	\end{equation*}
	as $n \to \infty$. Hence
	\begin{equation}\label{lem5C2} 
		|d(s\eta+(1-s)x_n,x_n) - d(x_n,s\eta+(1-s)\xi)| \leq d(s\eta+(1 - s)x_n, s\eta+(1-s)\xi)	\rightarrow 0,
	\end{equation}
	as $n \to \infty$. 
	From (\ref{lem5C1}) and (\ref{lem5C2}) we have 
	\begin{eqnarray*}
		\liminf_{w \to \xi} d(s\eta+(1-s)\xi,w) - d(w,z_0) &\leq& \limsup_{n \to \infty} d(s\eta+(1-s)\xi,x_n) - d(x_n,z_0) \\ &\leq& \lim_{n \to \infty} d(s\eta+(1-s)\xi,x_n) - d(x_n,s\eta +(1-s)x_n) \\
		& & + \limsup_{n \to \infty} d(s\eta +(1-s)x_n,x_n) - d(x_n,z_0) \\
		&\leq & r.
	\end{eqnarray*}	
	Hence, for every $s \in (0,1)$, the point $s\eta+(1-s)\xi \in F_{z_0}(\xi,r)$, thus $(\eta,\xi)\subset F_{z_0}(\xi,r)$.
	Therefore,  $[\eta,\xi]\subset \overline{F_{z_0}(\xi,r)}^{||\cdot||}$.
	
To complete the proof, consider the case $\eta \in \partial F_{z_0}(\xi,r)$. So there is such a sequence $\{y_n\} \subset F_{z_0}(\xi,r)$ such that $y_n \rightarrow \eta$, as $n \to \infty$. 
	From the previous considerations we have $[y_n,\xi]\subset \overline{F_{z_0}(\xi,r)}^{||\cdot||}$ for any $n \in \N$. 
	Fix $s \in [0,1]$. Then 
	\begin{equation*}
		||s\eta+(1-s)\xi-(sy_n+(1-s)\xi)|| = s ||\eta - y_n|| \rightarrow 0,
	\end{equation*}
	as $n \to \infty$ which means that $s\eta+(1-s)\xi \in 
	\overline{F_{z_0}(\xi,r)}^{||\cdot||}$. Therefore, $[\eta,\xi] \subset \overline{F_{z_0}(\xi,r)}^{||\cdot||}$, and the proof is complete.
\end{proof}

Theorem \ref{chuog} combined with Lemma \ref{gwC} gives an immediate result.

\begin{theorem}
	\label{chchdys}
	Let $D$ be a bounded convex domain in a Banach space $V$, and suppose that $(D,d)$ is $(1,\kappa)$-quasi geodesic space satisfying Axiom $1'$ and condition (\ref{C}), whose topology coincides with the norm topology. If $f:D \rightarrow D$ is a compact nonexpansive mapping without bounded orbits, then there exists $\xi \in \partial D$ such that 
	\begin{equation*}
		\Omega_f \subset \ch (\ch(\xi)).
	\end{equation*}
\end{theorem}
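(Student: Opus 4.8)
The plan is to deduce Theorem~\ref{chchdys} directly by combining Theorem~\ref{chuog} with Lemma~\ref{gwC}. First I would invoke Theorem~\ref{chuog}: since $f:D\to D$ is a compact nonexpansive mapping without bounded orbits and $(D,d)$ satisfies all the hypotheses (complete $(1,\kappa)$-quasi-geodesic, Axiom $1'$, condition~(\ref{C}), compatible topology), there exist $\xi\in\partial D$ and $z_0\in D$ such that
\begin{equation*}
	\Omega_f \subset \ch\Big(\bigcap_{r\in\mathbb{R}} \overline{F_{z_0}(\xi,r)}^{\,||\cdot||}\Big).
\end{equation*}
So it suffices to show the set inclusion $\ch\big(\bigcap_{r\in\mathbb{R}} \overline{F_{z_0}(\xi,r)}^{\,||\cdot||}\big) \subset \ch(\ch(\xi))$, which by monotonicity of $\ch$ reduces to the single point-set containment $\bigcap_{r\in\mathbb{R}} \overline{F_{z_0}(\xi,r)}^{\,||\cdot||} \subset \ch(\xi)$.

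To prove that containment I would take any $\eta$ in the intersection $\bigcap_{r\in\mathbb{R}} \overline{F_{z_0}(\xi,r)}^{\,||\cdot||}$. In particular $\eta\in\overline{F_{z_0}(\xi,r)}^{\,||\cdot||}$ for (say) some fixed $r>0$, so Lemma~\ref{gwC} applies and yields $[\eta,\xi]\subset\overline{F_{z_0}(\xi,r)}^{\,||\cdot||}$. One then needs that $[\eta,\xi]\subset\partial D$, i.e.\ that $\eta\in\ch(\xi)$. This is where the intersection over \emph{all} $r$ (rather than a single one) is used: for each $s\in(0,1)$ the point $p_s=s\eta+(1-s)\xi$ lies in $\overline{F_{z_0}(\xi,r)}^{\,||\cdot||}$ for every $r$, hence $p_s\in\bigcap_{r\in\mathbb{R}}\overline{F_{z_0}(\xi,r)}^{\,||\cdot||}$; but by Lemma~\ref{a3} (applicable because Axiom~$1'$ together with condition~(\ref{C}) — via Lemma~\ref{A3'} — gives Axiom~$3'$, or one can directly cite that the intersection of the horoball closures is the singleton $\{\xi\}$) this intersection equals $\{\xi\}$ when taken inside $\overline{D}$. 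That forces $p_s=\xi$ for all $s\in(0,1)$, which is impossible unless $\eta=\xi$, in which case $\eta=\xi\in\ch(\xi)$ trivially.

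The point requiring care — and the one I expect to be the main obstacle — is that $\bigcap_{r}\overline{F_{z_0}(\xi,r)}^{\,||\cdot||}$ is a priori only a subset of $\overline D$, and one must be careful about whether the relevant horoball closure is taken in the norm or in the $d$-metric; the hypotheses here force these closures to agree on $D$ and behave well at the boundary, so the identification $\bigcap_r \overline{F_{z_0}(\xi,r)}^{\,||\cdot||}=\{\xi\}$ does go through. An alternative, perhaps cleaner route that sidesteps invoking Lemma~\ref{a3} a second time: argue that if $\eta\ne\xi$ then, picking the point $p_s\in[\eta,\xi)$ with $p_s\in D$ (which exists since $D$ is a convex domain and $[\eta,\xi)\not\subset\partial D$ would be needed), Lemma~\ref{gwC} gives $[p_s,\xi]\subset\overline{F_{z_0}(\xi,r)}^{\,||\cdot||}$ for all $r$, so $p_s\in\{\xi\}$, a contradiction; hence $[\eta,\xi]\setminus\{\xi,\eta\}\subset\partial D$, and by closedness $[\eta,\xi]\subset\partial D$, i.e.\ $\eta\in\ch(\xi)$. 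Either way the theorem follows.
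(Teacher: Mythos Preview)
Your overall plan matches the paper's exactly: invoke Theorem~\ref{chuog} to obtain $\Omega_f \subset \ch\big(\bigcap_r \overline{F_{z_0}(\xi,r)}^{\,\|\cdot\|}\big)$, then use Lemma~\ref{gwC} to show $\bigcap_r \overline{F_{z_0}(\xi,r)}^{\,\|\cdot\|} \subset \ch(\xi)$. The gap is in how you close that last step. Both your main route and your ``alternative route'' rely on the claim $\bigcap_r \overline{F_{z_0}(\xi,r)}^{\,\|\cdot\|}=\{\xi\}$, for which you invoke Lemma~\ref{a3} via Axiom~$3'$. But Axiom~$3'$ is \emph{not} among the hypotheses, and Lemma~\ref{A3'} does not deliver it: under condition~(\ref{C}) the conclusion is only $[\xi,\eta]\subset\partial D$, not $\xi=\eta$. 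Without strict convexity the intersection of horoball closures can genuinely be larger than a single point (think of a vertex of a polytope with the Hilbert metric), so the identification you need simply fails.

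The repair --- and this is precisely the paper's argument --- is to aim for the weaker and sufficient containment $\bigcap_r \overline{F_{z_0}(\xi,r)}^{\,\|\cdot\|}\subset\partial D$. This follows directly: for any $p\in D$ the triangle inequality gives $\liminf_{w\to\xi}[d(p,w)-d(w,z_0)]\geq -d(p,z_0)>-\infty$, so $p\notin F_{z_0}(\xi,r)$ once $r<-d(p,z_0)$; and since $y\mapsto\liminf_{w\to\xi}[d(y,w)-d(w,z_0)]$ is $1$-Lipschitz, each $F_{z_0}(\xi,r)$ is $d$-closed in $D$, hence $D\cap\overline{F_{z_0}(\xi,r)}^{\,\|\cdot\|}=F_{z_0}(\xi,r)$. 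Thus the full intersection misses $D$. Now Lemma~\ref{gwC} (applied for every $r$) gives $[\zeta,\xi]\subset\bigcap_r\overline{F_{z_0}(\xi,r)}^{\,\|\cdot\|}\subset\partial D$ for each $\zeta$ in the intersection, i.e.\ $\zeta\in\ch(\xi)$, and monotonicity of $\ch$ finishes the proof.
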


\begin{proof}
	Fix $z_0 \in D$. It follows from Theorem \ref{chuog} that there exists $\xi \in \partial D$ such that 
	\begin{equation*}
		\Omega_f \subset \ch (\bigcap_{r\in \mathbb{R}} \, \overline{F_{z_0}(\xi,r)}^{||\cdot||}).
	\end{equation*}
	Consider $\zeta \in \bigcap_{r\in \mathbb{R}} \, \overline{F_{z_0}(\xi,r)}^{||\cdot||} $. By Lemma \ref{gwC} we have $	[\zeta,\xi] \subset \overline{F_{z_0}(\xi,r)}^{||\cdot||}$ for any $r \in \R$. Hence  $	 [\zeta,\xi] \subset \bigcap_{r\in \mathbb{R}} \overline{F_{z_0}(\xi,r)}^{||\cdot||}$. Thus $\zeta \in \ch (\xi)$ and hence 
	\begin{equation*}
		\bigcap_{r\in \mathbb{R}} \, \overline{F_{z_0}(\xi,r)}^{||\cdot||} \subset \ch(\xi).
	\end{equation*}
	Therefore, $\Omega_f \subset \ch (\ch(\xi)).$
\end{proof}

Let $D$ be a bounded convex domain in a Banach space $V$ and let $(D, d)$ be a metric space. In the context of Hilbert's metric, Karlsson presented a property which we will call Axiom $2^{\ast}$.\\

\noindent \textbf{Axiom $2^{\ast }$} If $\{x_{n}\}$ and $\{y_{n}\}$ are convergent
sequences in $D$ with limits $x$ and $y$ in $\partial D$, respectively, and
the segment $[x,y]\nsubseteq \partial D$, then for each $z\in D$ we have 
\begin{equation*}
	\lim_{n\rightarrow \infty } [d(x_{n},y_{n})-\max
	\{d(x_{n},z),d(y_{n},z)\}]=\infty .
\end{equation*}

The next lemma shows that the Hilbert metric satisfies Axiom $2^{\ast}$ (see \cite[Theorem 4.13]{Nu}, \cite[Proposition 8.3.3]{LeNu}).

\begin{proposition}
	Let $D \subset V$ be a bounded convex domain in a Banach space. If $\{x_{n}\}$ and $\{y_{n}\}$ are convergent
	sequences in $D$ with limits $x$ and $y$ in $\partial D$, respectively, and
	the segment $[x,y]\nsubseteq \partial D$, then for each $z\in D$ we have 
	\begin{equation*}
		\lim_{n\rightarrow \infty } [d_H(x_{n},y_{n})-\max
		\{d_H(x_{n},z),d_H(y_{n},z)\}]=\infty,
	\end{equation*}
that is, $(D,d_H)$ satisfies Axiom $2^{\ast}$.
\end{proposition}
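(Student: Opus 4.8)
The plan is to reduce the statement to the classical cross–ratio formula for $d_{H}$ together with a single geometric fact: along the chords that occur here, the ``exit distances'' are comparable to the norm–distance to $\partial D$. Write $\delta(w)=\operatorname{dist}_{\|\cdot\|}(w,\partial D)$. Since $\{x\}=[x,x]\subseteq\partial D$, the hypothesis $[x,y]\nsubseteq\partial D$ forces $x\neq y$, so $L_{n}:=\|x_{n}-y_{n}\|\to\|x-y\|=:L>0$; moreover $[x,y]$ then contains a point of $D$, and since $D$ is open and convex with $x,y\in\overline{D}$ this gives $(x,y)\subset D$, whence the intersection of $\overline{D}$ with the line through $x$ and $y$ equals $[x,y]$.

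First I would record the cross–ratio identities. For the chord of $\overline{D}$ through $x_{n}$ and $y_{n}$, with endpoint $a_{n}$ on $x_{n}$'s side and $b_{n}$ on $y_{n}$'s side, put $t_{n}=\|x_{n}-a_{n}\|$ and $q_{n}=\|y_{n}-b_{n}\|$; exactly as in the computation in Section~2,
\[
d_{H}(x_{n},y_{n})=\log\frac{(L_{n}+t_{n})(L_{n}+q_{n})}{t_{n}q_{n}}=-\log t_{n}-\log q_{n}+O(1),
\]
once one knows $t_{n},q_{n}\to0$; and indeed the line through $x_{n},y_{n}$ converges to the secant line through $x,y$, whose chord is $[x,y]$, so $a_{n}\to x$, $b_{n}\to y$ and hence $t_{n},q_{n}\to0$. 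Similarly, letting $A_{n}$ be the endpoint on $x_{n}$'s side of the chord through $x_{n}$ and $z$ and $s_{n}=\|x_{n}-A_{n}\|$, and observing that the other three distances entering the cross ratio stay in a fixed compact subinterval of $(0,\infty)$ because $z$ is a fixed point of $D$, we get $d_{H}(x_{n},z)=-\log s_{n}+O(1)$; likewise $d_{H}(y_{n},z)=-\log s'_{n}+O(1)$ with $s'_{n}$ the analogous exit distance from $y_{n}$.

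The key step is the comparison $t_{n},s_{n}\asymp\delta(x_{n})$ and $q_{n},s'_{n}\asymp\delta(y_{n})$, with constants independent of $n$. One direction is immediate since $a_{n},A_{n}\in\partial D$, giving $t_{n},s_{n}\ge\delta(x_{n})$. For the other direction I would use an ``ice–cream cone'' argument. Choose $\rho>0$ with $\overline{B}(z,\rho)\subset D$. Because $x_{n}$ lies on the segment $[A_{n},z]$ and $A_{n}\in\partial D$, the set $\operatorname{conv}(\{A_{n}\}\cup\overline{B}(z,\rho))\subset\overline{D}$ is a cone with apex $A_{n}$ having $x_{n}$ on its axis at parameter $\theta=\|x_{n}-A_{n}\|/\|A_{n}-z\|$ from the apex, so its cross–section at $x_{n}$ is a ball $B(x_{n},\theta\rho)\subset\overline{D}$; since $\|A_{n}-z\|=s_{n}+\|x_{n}-z\|$ is bounded, this yields $\delta(x_{n})\ge c\,s_{n}$ for large $n$. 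For $t_{n}$ the same argument applies with $z$ replaced by $m_{n}:=\tfrac12(x_{n}+y_{n})$: here $m_{n}\to\tfrac12(x+y)\in(x,y)\subset D$, so $\delta(m_{n})\ge\delta_{0}>0$ for large $n$, while $x_{n}\in[a_{n},m_{n}]$ with $a_{n}\in\partial D$, so $\operatorname{conv}(\{a_{n}\}\cup\overline{B}(m_{n},\delta_{0}/2))\subset\overline{D}$ gives $\delta(x_{n})\ge c'\,t_{n}$; symmetrically for $y_{n}$.

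Finally I would assemble the estimates. From the three cross–ratio identities and the comparisons,
\begin{align*}
d_{H}(x_{n},y_{n})&=-\log\delta(x_{n})-\log\delta(y_{n})+O(1),\\
d_{H}(x_{n},z)&=-\log\delta(x_{n})+O(1),\qquad d_{H}(y_{n},z)=-\log\delta(y_{n})+O(1),
\end{align*}
so that
\[
d_{H}(x_{n},y_{n})-\max\{d_{H}(x_{n},z),d_{H}(y_{n},z)\}=-\log\max\{\delta(x_{n}),\delta(y_{n})\}+O(1).
\]
Since $x_{n}\to x\in\partial D$ and $y_{n}\to y\in\partial D$ we have $\delta(x_{n}),\delta(y_{n})\to0$, so the right–hand side tends to $+\infty$, which is the assertion. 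I expect the geometric comparison $t_{n}\asymp\delta(x_{n})$ to be the only real obstacle: its content is that the hypothesis $[x,y]\nsubseteq\partial D$ makes the midpoint of $[x_{n},y_{n}]$ a genuinely interior point, so the chord through $x_{n},y_{n}$ meets $\partial D$ transversally near $x$; everything else is routine cross–ratio bookkeeping.
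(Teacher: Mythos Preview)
Your argument is correct, but it takes a substantially more computational route than the paper. The paper's proof exploits one extra structural fact about the Hilbert metric that short-circuits almost all of your cross--ratio bookkeeping: straight-line segments are $d_H$-geodesics, so with $u_n=\tfrac12(x_n+y_n)$ one has the exact additivity
\[
d_H(x_n,y_n)=d_H(x_n,u_n)+d_H(u_n,y_n)\ge d_H(x_n,z)+d_H(y_n,z)-2\,d_H(u_n,z).
\]
Since $u_n\to u=\tfrac12(x+y)\in D$ (exactly your midpoint observation), $d_H(u_n,z)$ stays bounded, while Axiom~$1'$ forces $d_H(x_n,z),d_H(y_n,z)\to\infty$; subtracting the maximum leaves at least the minimum minus a bounded term, and one is done. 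Your approach instead rebuilds this inequality from the cross--ratio formula via the two-sided comparison $t_n,s_n\asymp\delta(x_n)$, using the ice--cream cone argument and the same midpoint $m_n$. This is heavier but has two advantages: it never invokes the geodesic/additivity property of $d_H$ (only the explicit cross--ratio and convexity of $D$), and it yields the sharper asymptotic $d_H(x_n,y_n)-\max\{\cdots\}=-\log\max\{\delta(x_n),\delta(y_n)\}+O(1)$, which the paper's soft argument does not see. One small point worth making explicit in your write-up: the conclusion $t_n\to 0$ from ``the chord converges to $[x,y]$'' needs a line of justification in a Banach space (no sequential compactness of $\partial D$); your own observation that $\overline{D}\cap\mathrm{line}(x,y)=[x,y]$ suffices, since if $t_{n_k}\ge\varepsilon$ then $x_{n_k}-\tfrac{\varepsilon}{2}v_{n_k}\in\overline{D}$ would converge to a point of that line outside $[x,y]$.
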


\begin{proof}
	Consider a sequence $\{u_n\}$ defined as $u_n = \frac{x_n+y_n}{2}$ for any $n \in \N$. Since the segment $[x,y]$ does not lie on the boundary, we get that $u = \frac{x+y}{2} \in D$. Note that 
	\begin{equation*}
	||u_n-u || = \Big|\Big|\frac{x_n+y_n}{2} -\frac{x+y}{2}\Big|\Big| \leq \frac{1}{2} (||x_n-x|| + ||y_n-y||) \rightarrow 0, \quad n \to \infty.
	\end{equation*}
	Since topology of $(D,d_H)$ and $(\overline{D},||\cdot||)$ coincides on $D$ we have 
	$d_H (u_n,u) \rightarrow 0$, as $n \to \infty$. 
	Hence 
	\begin{equation}\label{HA1}
	\lim_{n \to \infty} d_H(u_n, w) = d_H(u,w) < \infty.
	\end{equation} 
	 for any $w \in D$. We note that  
\begin{eqnarray}\label{HA2}
	\begin{split}
	d_H(x_n, y_n) &=& &d_H(x_n, u_n) + d_H(u_n, y_n) \\ &\geq& &d_H(x_n, w) - d_H(u_n, w) +  d_H(y_n, w) - d_H(u_n, w).
	\end{split}
\end{eqnarray}
It follows from Axiom $1'$, (\ref{HA1}) and (\ref{HA2}) that  
\begin{equation*}
	 d_H(x_{n},y_{n})-\max\{d_H(x_{n},z),d_H(y_{n},z)\} \geq \min\{d_H(x_{n},z),d_H(y_{n},z)\} - 2d_H(u_n,w)  \rightarrow \infty,
\end{equation*}
as $n \to \infty$. That means that $(D,d_H)$ satisfies Axiom $2^{\ast}$.
\end{proof}	

In the case of a convex bounded domain D equipped with the Hilbert metric it was proved in \cite{KaNo} that the attractor (in the norm topology) $\Omega_f$ of a fixed point free nonexpansive mapping $f : D \rightarrow D$ is a star-shaped subset of $\partial D$.
Karlsson \cite{Ka3} proved this theorem using Gromov's product and it was important that the geodesics are linear segments. At the end of this section, we present a shorter proof of this theorem for all spaces satisfying Axiom $2^{\ast }$ and not using Gromov's product.

\begin{theorem}
	\label{gwHil} Let $D \subset V$ be a bounded convex domain in a Banach space, and let $(D,d)$ be $(1,\kappa)$-quasi geodesic space satisfying Axiom $1'$ and Axiom $2^{\ast}$, whose topology coincides with the norm topology. Suppose that $f:D \rightarrow D$ is a compact nonexpansive mapping without bounded orbits, then there exists $\xi \in \partial D$ such that  
	\begin{equation*}
		\Omega_f \subset \ch(\xi).
	\end{equation*}
\end{theorem}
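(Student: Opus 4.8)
Having fixed once and for all the boundary point $\xi\in\partial D$ furnished by Lemma \ref{lem1_comp}, the plan is to prove that every $\eta\in\Omega_f$ satisfies $[\eta,\xi]\subset\partial D$. So let $\eta\in\Omega_f$, say $f^{a_n}(y)\to\eta$ in the norm topology for some $y\in D$ and some strictly increasing sequence $\{a_n\}$. Exactly as in the proof of Theorem \ref{chuog}, Axiom $1'$ makes $(\overline{O(y)}^{d},d)$ a proper space, so Ca\l ka's theorem together with the nonexpansivity of $f$ yields $d(f^n(y),y)\to\infty$, and hence $\eta\in\partial D$. If $\eta=\xi$ there is nothing to prove, so suppose towards a contradiction that $[\eta,\xi]\nsubseteq\partial D$. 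The idea is then to produce, out of a single big horoball, a sequence converging to $\xi$ whose distances to a suitable subsequence of $\{f^{a_n}(y)\}$ stay bounded, which will be incompatible with Axiom $2^{\ast}$.

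Concretely, I would apply Lemma \ref{lem1_comp} with pole $z_0=y$, radius $r=0$ and the sequence $\{a_n\}$, obtaining $z\in D$ and a subsequence $\{a_{n_k}\}$ with $f^{a_{n_k}}(z)\in F_{y}(\xi,0)$ for every $k$. By definition of the big horoball this means $\liminf_{w\to\xi}[d(f^{a_{n_k}}(z),w)-d(w,y)]\le 0$, so for each $k$ I may pick $w_k\in D$ with $||w_k-\xi||<1/k$ and $d(f^{a_{n_k}}(z),w_k)-d(w_k,y)<1$. Then $w_k\to\xi$, $f^{a_{n_k}}(y)\to\eta$, and the triangle inequality together with the nonexpansivity of $f^{a_{n_k}}$ gives
\[
d(f^{a_{n_k}}(y),w_k)-d(w_k,y)\;\le\;d(y,z)+d(f^{a_{n_k}}(z),w_k)-d(w_k,y)\;<\;d(y,z)+1 .
\]
Since $\max\{d(f^{a_{n_k}}(y),y),d(w_k,y)\}\ge d(w_k,y)$, it follows that $d(f^{a_{n_k}}(y),w_k)-\max\{d(f^{a_{n_k}}(y),y),d(w_k,y)\}\le d(y,z)+1$ for all $k$. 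On the other hand, Axiom $2^{\ast}$ applied to the sequences $\{f^{a_{n_k}}(y)\}$ and $\{w_k\}$, with limits $\eta$ and $\xi$ and with the point $y\in D$, forces exactly this quantity to tend to $+\infty$; this contradiction shows $[\eta,\xi]\subset\partial D$, i.e.\ $\eta\in\ch(\xi)$. Since $\eta\in\Omega_f$ was arbitrary, $\Omega_f\subset\ch(\xi)$.

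The step I expect to need the most care is the extraction of the sequence $\{w_k\}$: one must pass from the single membership $f^{a_{n_k}}(z)\in F_y(\xi,0)$ to points $w_k\in D$ (it is crucial that they lie in $D$, so that Axiom $2^{\ast}$ may be applied to $f^{a_{n_k}}(y)$ and $w_k$) that converge to $\xi$ while keeping $d(f^{a_{n_k}}(z),w_k)-d(w_k,y)$ bounded above, and one must observe that, once $r$ is fixed, the point $z$ — hence the constant $d(y,z)$ — does not change with $k$. It is worth emphasizing that, in contrast to the route through Theorem \ref{chchdys}, the argument needs no passage $r\to-\infty$ and makes no use of condition $(C)$, Lemma \ref{A3'} or Lemma \ref{gwC}; this is precisely what upgrades the conclusion from $\ch(\ch(\xi))$ to $\ch(\xi)$.
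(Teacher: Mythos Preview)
Your argument is correct, but it takes a different route from the paper's. You obtain the distinguished boundary point $\xi$ from Lemma~\ref{lem1_comp} and then, for each $\eta\in\Omega_f$, unpack the horoball membership $f^{a_{n_k}}(z)\in F_y(\xi,0)$ to manufacture auxiliary points $w_k\to\xi$ with $d(f^{a_{n_k}}(z),w_k)-d(w_k,y)<1$; the triangle inequality then gives a uniform bound that contradicts Axiom~$2^{\ast}$. The paper bypasses horoballs and Lemma~\ref{lem1_comp} entirely: it fixes a single $y\in D$, uses Karlsson's elementary observation \cite[Observation~3.1]{Ka1} to select indices $n_i$ with $d(f^m(y),y)<d(f^{n_i}(y),y)$ for all $m<n_i$, defines $\xi$ as a subsequential limit of $f^{n_i}(y)$, and then for any $f^{a_k}(x)\to\eta$ estimates directly
\[
d(f^{a_k}(x),f^{n_i}(y))-d(f^{n_i}(y),y)\le d(x,y)+d(f^{n_i-a_k}(y),y)-d(f^{n_i}(y),y)\le d(x,y),
\]
so that a diagonal subsequence $\{n_{i_k}\}$ feeds into Axiom~$2^{\ast}$. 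Your approach has the advantage of reusing machinery already set up for Theorem~\ref{chuog}, and makes transparent why condition~(\ref{C}) and Lemma~\ref{A3'} are unnecessary here; the paper's approach is shorter and more self-contained, showing that the result rests only on the combinatorial escape-sequence trick rather than on any horoball analysis.
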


\begin{proof}
	Fix $y\in D$ and define a sequence $\{d_n\}$ as $d_{n}=d(f^{n}(y),y)$. Consider a $d$-closed and bounded set $B \subset D$. It follows from Axiom $1'$ that  
	$\overline{O(y)}^{\overline{d}} \cap B = \overline{O(y)}^{d} \cap B
	$ is compact in  $\overline{D}$ and hence compact in $D$.
	That is, $(\overline{O(y)}^{d},d)$ is proper and by the Całka theorem and nonexpansivity of $f$ we get  $d(f^n(y),y)\rightarrow \infty$, $n\rightarrow \infty. $  
	By \cite[Observation 3.1]{Ka1}, there is a sequence $\{n_i\}$ such that $d_{m}<d_{n_i}$ for $m<n_i, \ i=1,2,\ldots$. By Axiom $1'$ and since $\overline{f(D)}^{||\cdot||}$ is compact  (going to another subsequence if necessary) we can assume that $\{f^{n_i}(y)\}$ converges to some $\xi \in \partial D$. Fix $x \in D$ and choose a subsequence  $\{f^{a_{k}}(x)\}$ of $\{f^n(x)\}$ which converges to some $\eta \in \partial D$. Fix $k \in \mathbb{N}$. Then for sufficiently large $n_i$, we get  	
	\begin{eqnarray*}
		& & d(f^{a_k}(x),f^{n_i}(y)) - d(f^{n_i}(y),y) \\ &\leq& d(f^{a_k}(x),f^{a_k}(y))+ d(f^{a_k}(y),f^{n_i}(y)) - d(f^{n_i}(y),y)	\\ &\leq& d(x,y) + d(f^{n_i -a_k}(y),y) - d(f^{n_i}(y),y) \\ &\leq& 
		d(x,y) + d(f^{n_i}(y),y) - d(f^{n_i}(y),y) \\ &\leq& d(x,y). 
	\end{eqnarray*}	
	Hence there exists a subsequence $\{n_{i_k}\}$ of $\{n_i\}$ such that for any $k$, 
	\begin{equation}\label{ogrGH} 
		d(f^{a_k}(x),f^{n_{i_k}}(y)) - d(f^{n_{i_k}}(y),y) \leq d(x,y). 
	\end{equation}
	Then by (\ref{ogrGH}) we get 
	\begin{eqnarray*} & & \liminf_{k \to \infty} \, d(f^{a_k}(x),f^{n_{i_k}}(y)) - \max \{d(f^{a_k}(x),y),d(f^{n_i}(y),y) \} \\ &\leq& \liminf_{k \to \infty} \, d(f^{a_k}(x),f^{n_{i_k}}(y)) - d(f^{n_{i_k}}(y),y) \\ &\leq& d(x,y).
	\end{eqnarray*}
	It follows from Axiom $2^{\ast}$ that $[\eta, \xi] \subset \partial D$ and hence $\eta \in \ch(\xi)$.
\end{proof}

\section{Attractor of a semigroup of nonexpansive mappings}

The objective of this section is to extend the results of Section 4 to the case of continuous one-parameter semigroups of nonexpansive mappings. \\

A family $S=\{f_{t}:Y\rightarrow Y\mid t\in \lbrack 0,\infty )\}$ is called
a \textit{one-parameter continuous semigroup} if for all $s,t\in \lbrack
0,\infty )$, 
\begin{equation*}
	f_{s+t}=f_{t}\circ f_{s},
\end{equation*}%
and for every $y\in Y$ there is a limit $$\lim_{t\rightarrow
	0^{+}}f_{t}(y)=f_{0}(y)=y.$$

Let's mark it with a symbol $\omega_{S}(x)$ the set of accumulation points (in norm topology) of a semigroup $S$ defined as	
\begin{equation*}
	\omega_{S} (x) = \{y \in \overline{D}: ||f_{t_n}(x)-y|| \rightarrow 0 \; \; \mbox{for some increasing sequence \;} \{t_n\} \rightarrow \infty \}.
\end{equation*}
The \textit{attractor} of the semigroup $S$ is the set $\Omega_{S}$ defined as 
\begin{equation*}
	\Omega_{S} = \bigcup_{x \in D} \omega_{S} (x).
\end{equation*}

The next lemma says that the attractor of the semigroup $S$ is the same set as the attractor of the mapping $f_{t_0}$ for some $t_0>0$.

\begin{lemma}
	\label{attractors}	
	Let $D$ be a bounded convex domain in a Banach space $V$, and let $(D,d)$ be a $(1,\kappa)$-quasi geodesic space satisfying Axiom $5'$ whose topology coincide with the norm topology. Suppose that $S = \{f_t:D \rightarrow D\}_{t \geq 0}$ is a one-parameter continuous semigroup   of nonexpansive mappings without bounded orbits, and there exists $t_0$ such that $f_{t_0}:D \rightarrow D$ is a compact mapping. Then for every $t_0>0$, 
	\begin{equation*}
		\Omega_{S} = \Omega_{f_{t_0}}.
	\end{equation*}
\end{lemma}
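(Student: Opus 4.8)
The plan is to prove the two inclusions $\Omega_{f_{t_0}}\subseteq\Omega_S$ and $\Omega_S\subseteq\Omega_{f_{t_0}}$ separately; the first is routine, and the second is the substantive part. For $\Omega_{f_{t_0}}\subseteq\Omega_S$, fix $y\in D$ and $\eta\in\omega_{f_{t_0}}(y)$. The semigroup law gives $f_{t_0}^{n}(y)=f_{nt_0}(y)$ for every $n\in\N$, so an increasing sequence of naturals $\{n_k\}$ with $\|f_{t_0}^{n_k}(y)-\eta\|\to 0$ produces the increasing sequence $t_k:=n_kt_0\to\infty$ with $\|f_{t_k}(y)-\eta\|\to 0$, whence $\eta\in\omega_S(y)\subseteq\Omega_S$.

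For the reverse inclusion, fix $x\in D$ and $\eta\in\omega_S(x)$, so there is an increasing sequence $t_n\to\infty$ with $\|f_{t_n}(x)-\eta\|\to 0$. Write $t_n=k_nt_0+s_n$ with $k_n\in\N$ and $s_n\in[0,t_0)$; since $t_n\to\infty$ while $\{s_n\}$ stays bounded, $k_n\to\infty$. The semigroup law gives $f_{t_n}(x)=f_{k_nt_0+s_n}(x)=f_{k_nt_0}(f_{s_n}(x))=f_{t_0}^{k_n}\big(f_{s_n}(x)\big)$. By Lemma~\ref{compactK} applied with $C=\{x\}$, the set $\{f_s(x):0\le s\le t_0\}$ is compact in $D$, so after passing to a subsequence we may assume $f_{s_{n_j}}(x)\to w$ for some $w\in D$, i.e.\ $d(f_{s_{n_j}}(x),w)\to 0$. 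Iterating nonexpansivity of $f_{t_0}$ yields
\[
d\big(f_{t_{n_j}}(x),f_{t_0}^{k_{n_j}}(w)\big)=d\big(f_{t_0}^{k_{n_j}}(f_{s_{n_j}}(x)),f_{t_0}^{k_{n_j}}(w)\big)\le d\big(f_{s_{n_j}}(x),w\big)\longrightarrow 0 ,
\]
and Axiom $5'$ upgrades this to $\|f_{t_{n_j}}(x)-f_{t_0}^{k_{n_j}}(w)\|\to 0$. Combining with $\|f_{t_{n_j}}(x)-\eta\|\to 0$ gives $\|f_{t_0}^{k_{n_j}}(w)-\eta\|\to 0$, and passing to a further subsequence so that $\{k_{n_j}\}$ is strictly increasing, we obtain $\eta\in\omega_{f_{t_0}}(w)\subseteq\Omega_{f_{t_0}}$.

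The step I expect to be the main obstacle is precisely this reverse inclusion: the times $t_n$ appearing in an $S$-orbit need not be integer multiples of $t_0$, so one must absorb the fractional parts $f_{s_n}(x)$ and then convert a discrepancy small in the metric $d$ into one small in the norm $\overline d=\|\cdot\|$ in which the omega-limit sets are defined. Lemma~\ref{compactK} disposes of the first point by supplying relative compactness of $\{f_s(x):0\le s\le t_0\}$ inside $D$, and Axiom $5'$ disposes of the second. Note that the assumptions that $S$ has no bounded orbits and that some $f_{t_0}$ is compact are not needed for the set equality itself; they serve to guarantee that the common set $\Omega_S=\Omega_{f_{t_0}}$ is a nonempty subset of $\partial D$, which is the form in which this lemma is used in the sequel.
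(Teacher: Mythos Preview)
Your proof is correct and follows essentially the same route as the paper's: both split $t_n=k_nt_0+s_n$, pass to a convergent fractional part, apply nonexpansivity of $f_{t_0}^{k_n}$, and invoke Axiom~$5'$ to transfer $d$-smallness to norm-smallness. The only cosmetic difference is that the paper extracts $s_n\to s_0$ via compactness of $[0,t_0]$ and continuity of the semigroup (so the base point is $f_{s_0}(x_0)$), whereas you invoke Lemma~\ref{compactK} with $C=\{x\}$ to get a limit $w\in D$ directly; your closing remark that the unbounded-orbit and compactness hypotheses are not used in the set equality is also accurate.
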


\begin{proof}
	Fix $t_0>0$.\\
	$"\supset"$  
 We consider $\eta \in \Omega_{f_{t_0}}$. It follows from the definition of the attractor that there exist $x_0$ and a subsequence $\{f^{a_n}_{t_0}(x_0)\}$ of $\{f^{n}_{t_0}(x_0)\}$ such that $f^{a_n}_{t_0}(x_0) \rightarrow \eta$, as $n \to \infty$. Note that 
 $f^n_{t_0}(x) = f_{nt_0}(x)$. Then $f_{nt_0}(x) \rightarrow \eta$, as $n \to \infty$. Hence $\eta \in \Omega_{S}$.  \\
	$"\subset"$ To show the opposite inclusion, choose $\eta \in \Omega_{S}$. Again from the definition of the attractor, there is a monotone sequence $t_1<t_2< \ldots, \; $  $t_n \rightarrow \infty$ and $x_0 \in D$ such that
	\begin{equation}\label{zbwnorm}
		||f_{t_n}(x_0) - \eta|| \rightarrow 0,
	\end{equation}
as $n \to \infty$. For any $n \in \N$, there exist ${a_n} \in \N$ and $s_n \in [0,t_0)$ such that $t_n = a_nt_0+s_n$. Furthermore, $a_1 \leq a_2 \leq \ldots$. We can assume (considering a subsequence of this sequence) that $a_1<a_2<\ldots$. Hence 
	\begin{equation*}
		f_{t_n}(x_0) = f_{a_nt_0+s_n}(x_0) = f_{t_0}^{a_n}(f_{s_n}(x_0)).
	\end{equation*}
We can suppose that $s_n \rightarrow s_0 \in [0,t_0]$. By continuity of $S$ we get
	\begin{equation}\label{cgdoatr} 
		d(f_{s_n}(x_0),f_{s_0}(x_0)) \rightarrow 0,
	\end{equation}
	as $n \to \infty$.	Hence  
	\begin{equation*}
		d(f_{t_0}^{a_n}(f_{s_n}(x_0)),f_{t_0}^{a_n}(f_{s_0}(x_0))) \leq d(f_{s_n}(x_0),f_{s_0}(x_0)) \rightarrow 0, 
	\end{equation*}
	as $n \to \infty$. Since by assumption, Axiom $5'$ is satisfied, we have
	\begin{equation}\label{zbza5} 
		||f_{t_0}^{a_n}(f_{s_n}(x_0))-f_{t_0}^{a_n}(f_{s_0}(x_0)) || \rightarrow 0,
	\end{equation}
	as $n \to \infty$. Then it follows from (\ref{zbwnorm}) and (\ref{zbza5}) that
	\begin{eqnarray*}
		||f_{t_0}^{a_n}(f_{s_0}(x_0)) - \eta || &\leq& ||f_{t_0}^{a_n}(f_{s_0}(x_0)) - f_{t_0}^{a_n}(f_{s_n}(x_0))|| + ||f_{t_0}^{a_n}(f_{s_n}(x_0)) - \eta|| \\
		&\leq & ||f_{t_0}^{a_n}(f_{s_0}(x_0)) - f_{t_0}^{a_n}(f_{s_n}(x_0))||+||f_{t_n}(x_0)- \eta|| \rightarrow 0,
	\end{eqnarray*}	
	as $n \to \infty$. Therefore, 
	\begin{equation*}
		\eta \in \omega_{f_{t_0}}(f_{s_0}(x_0)) \subset \Omega_{f_{t_0}}.
	\end{equation*}
\end{proof}

The above lemma in combination with Theorem \ref{chuog} gives the following result, which is the counterpart of Theorem \ref{chuog} for one-parameter continuous semigroups.

\begin{theorem} 
	\label{chuogS} Let $D$ be a bounded convex domain in a Banach space $V$, and let $(D,d)$ be a $(1,\kappa)$-quasi geodesic space satisfying Axiom $1'$, Axiom $5'$ and condition $(\ref{C})$ whose topology coincide with the norm topology.  If $S=\{ f_t:D \rightarrow D\}_{t \geq 0}$ is a one-parameter continuous semigroup   of nonexpansive mappings without bounded orbits, and there exists $t_0$ such that $f_{t_0}:D \rightarrow D$ is a compact mapping. Then there exists $\xi \in \partial D$ such that 
	\begin{equation*}
		\Omega_S \subset \ch  (\bigcap_{r\in \mathbb{R}} \, \overline{F_{z_0}(\xi,r)}^{||\cdot||})
	\end{equation*}
	for some $z_0 \in D$.
\end{theorem}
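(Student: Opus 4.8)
The plan is to reduce the statement to the already–established discrete case, Theorem \ref{chuog}, by means of Lemma \ref{attractors}. I would first fix a $t_0>0$ for which $f_{t_0}:D\to D$ is compact, and check that $f_{t_0}$, regarded as a single nonexpansive self–map, has no bounded orbits. Suppose not: then $\{f_{t_0}^{n}(y)\}_{n\in\N}=\{f_{nt_0}(y)\}_{n\in\N}$ is bounded for some $y\in D$. Put $O(y)=\{f_t(y):t\ge 0\}$. As in the proof of Theorem \ref{main_inftycomp}, for $t\ge t_0$ one has $f_t(y)\in\overline{f_{t_0}(D)}^{\overline{d}}$, which is compact since $f_{t_0}$ is compact, while $\{f_t(y):0\le t\le t_0\}$ is compact by Lemma \ref{compactK} (with $C=\{y\}$); hence $\overline{O(y)}^{\overline{d}}$ is a compact subset of $\overline{D}$, and by Axiom $1'$ the set $\overline{O(y)}^{\overline{d}}\cap B=\overline{O(y)}^{d}\cap B$ is compact for every $d$–closed bounded $B\subset D$, i.e. $(\overline{O(y)}^{d},d)$ is proper and invariant under every $f_t$. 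Applying Theorem \ref{calkaC} in this proper space with $s_k=kt_0$, boundedness of $\{f_{s_k}(y)\}$ forces $O(y)$ to be bounded, contradicting the hypothesis on $S$. So $f_{t_0}$ has no bounded orbits.

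Next I would apply Theorem \ref{chuog} to $f=f_{t_0}$: $(D,d)$ is a $(1,\kappa)$–quasi–geodesic space satisfying Axiom $1'$ and condition $(\ref{C})$ with topology equal to the norm topology, and $f_{t_0}$ is compact, nonexpansive and without bounded orbits, so there exist $\xi\in\partial D$ and $z_0\in D$ with
\[
\Omega_{f_{t_0}}\ \subset\ \ch\Big(\bigcap_{r\in\R}\overline{F_{z_0}(\xi,r)}^{||\cdot||}\Big).
\]
(The completeness hypothesis in the statement of Theorem \ref{chuog} enters its proof only to guarantee properness of the orbit closure $\overline{O(y)}^{d}$; here that properness comes for free from the compactness of $f_{t_0}$, exactly as above, so it is not an additional requirement.) Finally, since $(D,d)$ satisfies Axiom $5'$, $S$ has no bounded orbits and $f_{t_0}$ is compact, Lemma \ref{attractors} gives $\Omega_S=\Omega_{f_{t_0}}$, and combining this with the displayed inclusion yields $\Omega_S\subset\ch\big(\bigcap_{r\in\R}\overline{F_{z_0}(\xi,r)}^{||\cdot||}\big)$, which is the assertion.

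I expect the only genuine obstacle to be the first step — transferring the ``no bounded orbits'' property from the semigroup to the time–$t_0$ map. Ca\l ka's theorem is available only in proper metric spaces, so it cannot be quoted for $(D,d)$ directly; one must first cut out the proper subspace $\overline{O(y)}^{d}$ using the compactness of $f_{t_0}$ (with Lemma \ref{compactK} handling the initial interval $[0,t_0]$). Once that reduction is carried out, the remainder is a formal combination of Lemma \ref{attractors} and Theorem \ref{chuog} with no further estimates.
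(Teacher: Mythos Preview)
Your proposal is correct and follows exactly the paper's own route: the paper simply states that Theorem \ref{chuogS} follows from Lemma \ref{attractors} combined with Theorem \ref{chuog}, and you have spelled out precisely this reduction, including the intermediate step (left implicit in the paper) that $f_{t_0}$ inherits the ``no bounded orbits'' property via Ca\l ka's theorem on the proper orbit closure. Your parenthetical remark about the completeness hypothesis in Theorem \ref{chuog} is also accurate---that hypothesis is not actually used in its proof beyond the properness that compactness of $f_{t_0}$ already supplies.
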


Using again Lemma \ref{attractors}, combined with Theorem \ref{chchdys} or Theorem \ref{gwHil} respectively, this allows us to obtain the following two results.

\begin{theorem}
	Let $D$ be a bounded convex domain in a Banach space $V$, and let $(D,d)$ be a $(1,\kappa)$-quasi geodesic space satisfying Axiom $1'$, Axiom $5'$ and condition (\ref{C}) whose topology coincide with the norm topology. If $S=\{ f_t:D \rightarrow D\}_{t \geq 0}$ is a one-parameter continuous semigroup of nonexpansive mappings without bounded orbits, and there exists $t_0$ such that $f_{t_0}:D \rightarrow D$ is a compact mapping, then there exists $\xi \in \partial D$ such that 
	\begin{equation*}
		\Omega_S \subset \ch (\ch(\xi)).
	\end{equation*}
\end{theorem}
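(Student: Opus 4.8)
The plan is to reduce the semigroup statement to the already-established iterate version, Theorem~\ref{chchdys}, by way of Lemma~\ref{attractors}. The hypotheses of the present theorem are precisely what is needed to invoke both of those results: $(D,d)$ is a bounded convex $(1,\kappa)$-quasi-geodesic space satisfying Axiom~$1'$, Axiom~$5'$ and condition~(\ref{C}), $S=\{f_t\}_{t\ge 0}$ is a one-parameter continuous semigroup of nonexpansive maps without bounded orbits, and some $f_{t_0}$ is compact. So I would fix such a $t_0>0$ and apply Lemma~\ref{attractors} to get the identification $\Omega_S=\Omega_{f_{t_0}}$.

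The second step is to observe that $f_{t_0}:D\to D$ satisfies the hypotheses of Theorem~\ref{chchdys}: it is nonexpansive (as a member of the semigroup), it is compact by assumption, and it has no bounded orbits. The last point needs a one-line justification: if $\{f_{t_0}^{n}(y)\}_n=\{f_{nt_0}(y)\}_n$ were bounded for some $y$, then by the Ca\l ka-type theorem for semigroups (Theorem~\ref{calkaC}, applied on the proper space $(\overline{O(y)}^{d},d)$ as in the proof of Theorem~\ref{main_inftycomp}) the whole orbit $\{f_t(y):t\ge 0\}$ would be bounded, contradicting the hypothesis that $S$ has no bounded orbits. Alternatively one simply notes that an unbounded semigroup orbit forces the sampled sequence to be unbounded via Ca\l ka. Either way, $f_{t_0}$ is a compact nonexpansive mapping without bounded orbits.

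Now Theorem~\ref{chchdys} yields $\xi\in\partial D$ with $\Omega_{f_{t_0}}\subset\ch(\ch(\xi))$, and combining with $\Omega_S=\Omega_{f_{t_0}}$ from Lemma~\ref{attractors} gives $\Omega_S\subset\ch(\ch(\xi))$, which is the claim. I do not anticipate a genuine obstacle here, since all the work has been done in Sections~4 and~5; the only point requiring care is checking that "$S$ without bounded orbits" transfers to "$f_{t_0}$ without bounded orbits," and that the spatial hypotheses (Axiom~$1'$, Axiom~$5'$, condition~(\ref{C}), bounded convex domain with coinciding topologies) are exactly the intersection of what Lemma~\ref{attractors} and Theorem~\ref{chchdys} demand — which they are. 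A short proof along these lines follows.

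\begin{proof}
	Fix $t_0>0$ as in the hypothesis. If some orbit $\{f_{nt_0}(y)\}_{n\in\mathbb{N}}$ of the iterates of $f_{t_0}$ were bounded, then, arguing exactly as in the proof of Theorem~\ref{main_inftycomp} (the space $(\overline{O(y)}^{d},d)$ is proper by compactness of $f_{t_0}$ and Axiom~$1'$), Theorem~\ref{calkaC} would imply that the full semigroup orbit $\{f_t(y):t\ge 0\}$ is bounded, contradicting the assumption that $S$ has no bounded orbits. Hence $f_{t_0}:D\to D$ is a compact nonexpansive mapping without bounded orbits. By Theorem~\ref{chchdys} there exists $\xi\in\partial D$ with
	\begin{equation*}
		\Omega_{f_{t_0}}\subset\ch(\ch(\xi)).
	\end{equation*}
	On the other hand, since $(D,d)$ satisfies Axiom~$5'$ and the remaining hypotheses of Lemma~\ref{attractors} hold, that lemma gives $\Omega_S=\Omega_{f_{t_0}}$. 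Therefore $\Omega_S\subset\ch(\ch(\xi))$.
\end{proof}
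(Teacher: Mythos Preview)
Your proposal is correct and follows exactly the approach indicated by the paper, which simply states that the result follows from Lemma~\ref{attractors} combined with Theorem~\ref{chchdys}. The only addition is your explicit verification that $f_{t_0}$ has no bounded orbits (via the Ca\l ka-type argument from the proof of Theorem~\ref{main_inftycomp}), a detail the paper leaves implicit.
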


\begin{theorem} 
	Let $D$ be a bounded convex domain in a Banach space $V$, and let $(D,d)$ be a $(1,\kappa)$-quasi geodesic space satisfying  Axiom $1'$, Axiom $2^{\ast}$ and Axiom $5'$ whose topology coincide with the norm topology. If $S=\{ f_t:D \rightarrow D\}_{t \geq 0}$ is a one-parameter continuous semigroup of nonexpansive mappings without bounded orbits, then there exists $\xi \in \partial D$ such that
	\begin{equation*}
		\Omega_S \subset \ch(\xi).
	\end{equation*}
\end{theorem}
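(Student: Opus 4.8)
The plan is to obtain this theorem as an immediate consequence of two results already established in the paper: Theorem \ref{gwHil}, which handles the single-map case for spaces satisfying Axiom $1'$ and Axiom $2^{\ast}$, and Lemma \ref{attractors}, which identifies the attractor of the semigroup with the attractor of one of its time-$t_0$ maps. The first step is to produce a compact map inside the semigroup; since the hypotheses do not explicitly posit one, I would note that the hypotheses here must be read together with the standing assumptions of the section (in particular, one needs some $t_0>0$ with $f_{t_0}$ compact), so I would begin by fixing such a $t_0>0$.

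Next I would check that $f_{t_0}$ has no bounded orbit. Suppose toward a contradiction that $\{f^n_{t_0}(y)\} = \{f_{nt_0}(y)\}$ were bounded for some $y$; then a counterpart of Ca\l ka's theorem (Theorem \ref{calkaC}, valid because compactness of $f_{t_0}$ makes the relevant orbit closure proper, exactly as in the proof of Theorem \ref{main_inftycomp}) would force the whole orbit $\{f_t(y)\}_{t\geq 0}$ to be bounded, contradicting the hypothesis that $S$ has no bounded orbits. Hence $f_{t_0}$ is a compact nonexpansive map without bounded orbits on the $(1,\kappa)$-quasi-geodesic space $(D,d)$, which satisfies Axiom $1'$ and Axiom $2^{\ast}$.

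Now Theorem \ref{gwHil} applies to $f_{t_0}$ and yields $\xi\in\partial D$ with $\Omega_{f_{t_0}}\subset\ch(\xi)$. Finally, Lemma \ref{attractors} requires Axiom $5'$ (which is among our hypotheses), a compact $f_{t_0}$, and a semigroup without bounded orbits — all in force — so it gives $\Omega_S = \Omega_{f_{t_0}}$. Combining, $\Omega_S = \Omega_{f_{t_0}}\subset\ch(\xi)$, which is the claim.

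The only real obstacle is the reduction step, namely confirming that the compact-map hypothesis is available and that $f_{t_0}$ inherits the ``no bounded orbit'' property; this is precisely the Ca\l ka-type argument already used in Theorem \ref{main_inftycomp} and in Lemma \ref{attractors}, so no new ideas are needed. Once that is in place, the theorem is a one-line synthesis of Theorem \ref{gwHil} and Lemma \ref{attractors}. I would therefore write the proof as: fix $t_0>0$ with $f_{t_0}$ compact; argue $f_{t_0}$ has no bounded orbit via Ca\l ka; invoke Theorem \ref{gwHil} to get $\xi$ with $\Omega_{f_{t_0}}\subset\ch(\xi)$; invoke Lemma \ref{attractors} to get $\Omega_S=\Omega_{f_{t_0}}$; conclude.
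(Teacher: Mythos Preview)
Your approach is exactly the paper's: the author states the result as an immediate corollary of Lemma \ref{attractors} combined with Theorem \ref{gwHil}, with no separate proof written out. You have correctly filled in the one nontrivial verification the paper leaves implicit, namely that $f_{t_0}$ inherits the ``no bounded orbit'' property via the Ca\l ka-type argument on the proper orbit closure (as in the proof of Theorem \ref{main_inftycomp}). You are also right to flag that the compactness hypothesis on some $f_{t_0}$---present in the two companion theorems and in Lemma \ref{attractors} itself---appears to have been accidentally dropped from this statement; both Theorem \ref{gwHil} and the reduction step need it, so your reading of it as a standing assumption of the section is the only way the argument goes through.
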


\bigskip 

\textbf{Acknowledgements}  Author  was partially supported by National Science Center (Poland) Preludium Grant No. UMO-2021/41/N/ST1/02968.

\bigskip


\begin{thebibliography}{99}

\bibitem{Ab} M. Abate, Horospheres and iterates of holomorphic maps, Math. Z. 198 (1988), 225--238.

\bibitem{AbRa} M. Abate and J. Raissy, Wolff--Denjoy theorems in nonsmooth convex domains, Ann. Mat. Pura Appl. 193 (2014), 1503--1518.

\bibitem{Be1} A. F. Beardon, Iteration of contractions and analytic maps, J. London Math. Soc. 41 (1990), 141--150.

\bibitem{Be2} A. F. Beardon, The dynamics of contractions, Ergod. Th. \& Dynam. Sys. 17 (1997), 1257--1266.

\bibitem{Bu1} M. Budzy\'{n}ska, A Denjoy--Wolff theorem in $C^{n}$, Nonlinear Anal. 75 (2012), 22--29.

\bibitem{Ha} L. A. Harris, Schwarz-Pick systems of pseudometrics for domains in normed linear spaces,  Advances in holomorphy, J. A. Barroso (ed.), North Holland (1979), 345-406.

\bibitem{HuWi1} A. Huczek, A. Wi\'snicki, Wolff–Denjoy theorems in geodesic spaces, Bull. Lond. Math. Soc. 53 (2021), 1139-1158.

\bibitem{HuWi2} A. Huczek, A. Wi\'snicki, Theorems of Wolff–Denjoy type for semigroups of nonexpansive
mappings in geodesic spaces,  Math. Nachr. (2023). 

\bibitem{Ka1} A. Karlsson, Nonexpanding maps and Busemann functions, Ergodic Theory Dynam. Systems 21, (2001) 1447--57.

\bibitem{Ka3} A. Karlsson, Dynamics of Hilbert nonexpansive maps, in Handbook of Hilbert geometry, A. Papadopoulos and M. Troyanov (eds.), European Mathematical Society, Z\"{u}rich, 2014, pp. 263--273.

\bibitem{KaNo} A. Karlsson and G.A. Noskov, The Hilbert metric and Gromov hyperbolicity, Enseign. Math. (2) 48 (2002), 73--89.

\bibitem{KRS} T. Kuczumow, S. Reich, D. Shoikhet,  Fixed points of holomorphic mappings: a metric approach, in
Handbook of Metric Fixed Point Theory, W. A. Kirk and B. Sims (eds.), {\em Kluwer Academic Publishers}, Dordrecht, (2001), 437-515.


\bibitem{LLNW} B. Lemmens at al., Denjoy-Wolff theorems for Hilbert's and Thompson's metric spaces, J. Anal. Math. 134 (2018), 671--718.

\bibitem{LeNu} M. Lemmens and R. Nussbaum, Nonlinear Perron--Frobenius theory, Cambridge University Press, Cambridge, 2012.

\bibitem{Nu} R. D. Nussbaum, Fixed point theorems and Denjoy--Wolff theorems for Hilbert's projective metric in infinite dimensions, Topol. Methods Nonlinear Anal. 29 (2007), 199--250.

\bibitem{Pi} B. Pi\c{a}tek, The behavior of fixed point free nonexpansive mappings in geodesic spaces, J. Math. Anal. Appl. 445 (2017), 1071–1083.

\end{thebibliography}
\end{document}